\numberwithin{equation}{section}
\numberwithin{figure}{section}
\newtheorem{theorem}{Theorem}[section]
\newtheorem{proposition}[theorem]{Proposition}
\newtheorem{lemma}[theorem]{Lemma}
\newtheorem*{notation}{Notation}
\newtheorem*{assumption a}{Assumption A}
\newtheorem*{assumption b}{Assumption B}
\newtheorem*{assumption c}{Assumption C}
\theoremstyle{definition}
\newtheorem{definition}[theorem]{Definition}
\theoremstyle{remark}
\newtheorem{remark}[theorem]{Remark}
\DeclarePairedDelimiter{\norm}{\lVert}{\rVert}
\newcommand{\N}{\mathbb{N}}
\newcommand{\R}{\mathbb{R}}
\newcommand{\C}{\mathbb C}
\renewcommand{\leq}{\leqslant}
\renewcommand{\geq}{\geqslant}
\DeclareMathAlphabet{\mathpzc}{OT1}{pzc}{m}{it}
\renewcommand{\Im}{\mathcal I\!\mathpzc{m}}
\begin{document}

\title[Multi-solitons, multi kink-solitons]{Construction of multi-solitons and multi kink-solitons of derivative nonlinear Schr\"odinger equations}

\author[Phan Van Tin]{Phan Van Tin}

\address[Phan Van Tin]{Institut de Math\'ematiques de Toulouse ; UMR5219,
  \newline\indent
  Universit\'e de Toulouse ; CNRS,
  \newline\indent
  UPS IMT, F-31062 Toulouse Cedex 9,
  \newline\indent
  France}
\email[Phan Van Tin]{van-tin.phan@univ-tlse3.fr}

\subjclass[2020]{35Q55; 35C08; 35Q51}

\date{\today}
\keywords{Nonlinear derivative Schr\"odinger equations, Multi solitons, Multi kink-solitons}

\begin{abstract} 
We look for solutions to derivative nonlinear Schrodinger equations built upon solitons. We prove the existence of multi-solitons i.e. solutions behaving at large time as the sum of finite solitons. We also show that one can attach a kink at the begin of the sum of solitons i.e multi kink-solitons. Our proofs proceed by fixed point arguments around the desired profile, using Strichartz estimates. 
\end{abstract}

\maketitle
\tableofcontents

\section{Introduction}
We consider the derivative nonlinear Schr\"odinger equation:
\begin{equation}\label{gemeral}
\begin{cases}
iu_t+u_{xx}+i\alpha |u|^2u_x+i\mu u^2\overline{u_x} + f(u)=0,\\
u(0)=u_0.
\end{cases}
\end{equation}
where $\alpha,\mu\in\R$, $f: \C \rightarrow \C$ is a given function and $u$ is a complex valueed function of $(t,x)\in\R\times\R$. 

In \cite{TsFu80,TsFu81}, Tsutsumi and Fukuda used an approximation argument to prove the existence of solutions of \eqref{gemeral} in the case $\alpha=-2$, $\mu=-1$. In this case with $f=0$, Biagioni and Linares \cite{BiLi01} proved that the solution map from $H^s(\R)$ to $C([-T,T],H^s(\R))$ is not locally uniformly continuous, for $T>0$ and $s<\frac{1}{2}$. The $H^{\frac{1}{2}}$ solution in this case is global if $\norm{u_0}^2_{L^2}<2\pi$ by the work of Miao-Wu-Xu \cite{MiWuXu11}. Later, Guo and Wu \cite{GuWu17} improved this result; that is, $H^{\frac{1}{2}}$ solution is global if $\norm{u_0}^2_{L^2}<4\pi$. The Cauchy problem of \eqref{gemeral} was also studied as in \cite{Ta99}, where gauge transformation and Fourier restriction method are used to obtain local well-posedness in $H^s$, $s \geq 1/2$. In \cite{Oz96}, Ozawa studied the Cauchy problem and gave a sufficient condition of global well-posedness for \eqref{gemeral}. The proof was used gauge transformations which reduce the original equations to systems of equations without derivative nonlinearities. In \cite{HaOz92,HaOz94}, in the case $\alpha=2\mu$, Hayashi-Ozawa proved the unique global existence of solutions to \eqref{gemeral} in Sobolev spaces and in the weighted spaces with smallness on the initial data $\norm{u_0}^2_{L^2} < \frac{4\pi}{|\alpha|}$. In the case $\alpha=-2$, $\mu=-1$, $f=0$, Wu \cite{Wu13} improved the global results in \cite{HaOz92,HaOz94}. More precisely, the author proved that the solutions exist globally in time under smallness on the initial data $\norm{u_0}_{L^2} < \sqrt{2\pi}+\varepsilon_*$, where $\varepsilon_*$ is a small positive constant. Later, Wu \cite{Wu15} improved this results for larger bounded on the initial data $\norm{u_0}_{L^2} < \sqrt{4\pi}$. The proof combines a gauge transformation and conservation laws with a sharp Gagliardo-Nirenberg inequality. In \cite{FuHaIn17}, by using variational argument, Fukaya-Hayashi-Inui gave results covering the result of Wu \cite{Wu15}. The authors showed that in the case $f=0$, $\alpha=1$, $\mu=0$, the $H^1$ solutions of \eqref{gemeral} exist globally in time for the initial satisfies $\norm{u_0}^2_{L^2}<4\pi$ or $\norm{u_0}_{L^2}^2=4\pi$ and $P(u_0)<0$, where $P$ is the momentum functional which is conserved under the flow of \eqref{gemeral}. In \cite{CoKeStTaTa02}, Colliander-Keel-Staffilani-Takaoka-Tao proved by the so-called \textit{I-method} the global well posedness in $H^s(\R)$, $s>\frac{1}{2}$ of \eqref{gemeral} if $\norm{u_0}_{L^2}^2<2\pi$ (see also \cite{CoKeStTaTa01}). In the case $f=0$ and $\mu=0$, \eqref{gemeral} is a completely integrable equation. The complete integrability structure of equation was used to prove global existence of solutions in $H^{2,2}(\R)$ by \cite{JeLiPeSu20} and in $H^s(\R)$, $s>\frac{1}{2}$ by \cite{BaPe20}. 

In the case $\mu=0$ and $f(u)=b|u|^4u$, there were a lot of works on studying stability and instability of solitons of \eqref{gemeral}. The family of solitons of \eqref{gemeral} has two parameters $(\omega,c)$. In the case $b=0$, Guo and Wu \cite{GuWu95} proved that the solitons are orbitally stable when $\omega>\frac{c^2}{4}$ and $c<0$ by using the abstract theory of Grillakis-Shatah-Strauss \cite{GrShSt87,GrShSt90}. After that, Colin and Ohta \cite{CoOh06} improved this result for all $\omega>\frac{c^2}{4}$ using variational techniques. In \cite{Ohta14}, Ohta proved that for each $b>0$ there exists a unique $s^{*}=s^{*}(b)>0 \in (0,1)$ such that the soliton $u_{\omega,c}$ is orbitally stable if $-2\sqrt{\omega}<c<2s^{*}\sqrt{\omega}$ and orbitally unstable if $2s^{*}\sqrt{\omega}<c<2\sqrt{\omega}$. In the case $b<0$, the stability result is obtained in \cite{Ha20}. In the case $b=0$, Kwon-Wu \cite{KwWu18} proved a stability result of solitons in the zero mass case. Removing the effect of scaling in the stability result of this work is an open question.

Our main goal of this paper is to study the multi-solitons theory of \eqref{gemeral}.

\subsection{Multi-solitons}\label{section multi solitons}

First, we focus on studying the following special form of \eqref{gemeral}:
\begin{equation}
\label{dNLS}
iu_t+u_{xx}+i|u|^2u_x+b|u|^4u=0.
\end{equation}          

Our first goal in this paper is to study the long time behaviour of solutions of \eqref{dNLS}. More precisely, we study the multi-solitons theory of \eqref{dNLS}. The existence of multi-solitons is a step towards the proof of the \textit{soliton resolution conjecture}, which states that all global solutions of a dispersive equation behave at large times as a sum of a radiative term and solitons. The theory of multi-soliton has attracted a lot of interest. In \cite{CoDoTs15,CoTs14}, Le Coz-Li-Tsai proved existence and uniqueness of finite and infinite soliton and kink-soliton trains of classical nonlinear Schrödinger equations, using fixed point arguments around of the desired profile. Another method was introduced in \cite{MaMe06} for the simple power nonlinear Schr\"odinger equation with $L^2$-subcritical nonlinearities. The proof was established by two ingredients: uniform estimates and a compactness property. The arguments were later modified to obtain the results for $L^2$-supercritical equations \cite{CoMaMe11} and for profiles made with excited states \cite{CoCo11}. One can also cite the works on the logarithmic Schrödinger equation (logNLS) in the focusing regime in \cite{Fe21}. In \cite{ZaSh71}, the inverse scattering transform method (IST) was used to construct multi-solitons of the one dimensional cubic focusing NLS. We would like also to mention the works on the non-linear Klein-Gordon equation \cite{CoMu14} and \cite{CoMa18}, and on the stability of multi-solitons for generalized Korteweg-de Vries equations and $L^2$-subcritical nonlinear Schrödinger equations from Martel, Merle and Tsai \cite{MaMeTs02},\cite{MaMeTs06}. In \cite{CoWu18}, Le Coz-Wu proved a stability result of multi-solitons of \eqref{dNLS} in the case $b=0$. Our motivation is to prove the existence of a multi-solitons in a similar sense as in \cite{CoTs14,CoDoTs15}. The method used in \cite{CoTs14,CoDoTs15} cannot apply directly in our case. The reason is the appearance of the derivative nonlinearities. To overcome this difficulty, we use a Gauge transformation to obtain a system of Schr\"odinger equations without derivative nonlinearities. We may use Strichartz estimates and fixed point argument to construct a suitable solution of this system. This solution satisfies a relation which is proved by using the Gr\"onwall inequality and the condition on the parameters and we obtain a solution of \eqref{dNLS}. This solution satisfies the desired property.  

%To study the multi-solitons of \eqref{dNLS}, we can not directly use the arguments as in \cite{CoTs14,CoDoTs15} by appearance of derivative term. By taking advantage of Gauge transform we may work on a system without derivative terms instead of \eqref{gemeral} and use the arguments as in \cite{CoTs14,CoDoTs15} for this new system.    

Consider equation \eqref{dNLS}. The soliton of equation \eqref{dNLS} is a solution of the form $R_{\omega,c}(t,x)= e^{i\omega t}\phi_{\omega,c}(x-ct)$, where $\phi_{\omega,c} \in H^1(\R)$ solves
\begin{equation}
\label{eqofphi}
-\phi_{xx}+\omega \phi+ic\phi_x-i|\phi|^2\phi_x-b|\phi|^4\phi=0, \quad x \in \R.
\end{equation}
Applying the following gauge transform to $\phi_{\omega,c}$
\[
\phi_{\omega,c}(x)=\Phi_{\omega,c}(x)\exp \left(i\frac{c}{2}x-\frac{i}{4}\int_{-\infty}^x|\Phi_{\omega,c}(y)|^2\, dy\right),
\]
it is easy to verify that $\Phi_{\omega,c}$ (see e.g \cite[Proof of Lemma 2]{CoOh06}) satisfies the following equation.
\begin{equation}\label{eqofPhi}
-\Phi_{xx}+\left(\omega-\frac{c^2}{4}\right)\Phi+\frac{c}{2}|\Phi|^2\Phi-\frac{3}{16}\gamma|\Phi|^4\Phi=0, \quad \gamma:=1+\frac{16}{3}b.
\end{equation}
The positive even solution of \eqref{eqofPhi} is explicitly obtained by: if $\gamma>0$ ($b>\frac{-3}{16}$),
\begin{equation}\label{formula Phi}
\Phi_{\omega,c}^2(x)=\left\{ \begin{matrix}
\frac{2(4\omega-c^2)}{\sqrt{c^2+\gamma (4\omega-c^2)}\cosh(\sqrt{4\omega-c^2} x)-c} & \text{ if } -2\sqrt{\omega}<c<2\sqrt{\omega},\\
\frac{4c}{(cx)^2+\gamma} & \text{ if } c=2\sqrt{\omega},
\end{matrix}
\right.
\end{equation}
and if $\gamma \leq 0$ ($b \leq -\frac{3}{16}$),
\[
\Phi_{\omega,c}^2(x)=\frac{2(4\omega-c^2)}{\sqrt{c^2+\gamma(4\omega-c^2)}\cosh(\sqrt{4\omega-c^2}x)-c} \text{ if } -2\sqrt{\omega}<c<-2s_{*}\sqrt{\omega},
\]
where $s_{*}=s_{*}(\gamma)=\sqrt{\frac{-\gamma}{1-\gamma}}$. We note that the following condition on the parameters $\gamma$ and $(\omega,c)$ is a necessary and sufficient condition for the existence of non-trivial solutions of \eqref{dNLS} vanishing at infinity (see \cite{BeLi831}):
\begin{align*}
\text{if } \gamma>0 (\Leftrightarrow b>\frac{-3}{16}) \text{ then } & -2\sqrt{\omega}<c\leq 2\sqrt{\omega},\\
\text{if } \gamma\leq 0 (\Leftrightarrow b\leq\frac{-3}{16}) \text{ then }& -2\sqrt{\omega}<c< -2s_{*}\sqrt{\omega}.
\end{align*}
Let $(c_j,\omega_j)$ satisfying for each $1 \leq j \leq K$ the condition of existence of soliton. For each $j \in \{1,2,..,K\}$, we set
\[
R_j(t,x)=e^{i\theta_j}R_{\omega_j,c_j}(t,x).
\] 
The profile of a multi-soliton is a sum of the form:
\begin{equation}\label{profileofinfinitesoliton}
R =\sum_{j=1}^{K}R_j.
\end{equation}
A solution of \eqref{dNLS} is called a multi-soliton if
\[
\norm{u(t)-R(t)}_{H^1} \rightarrow 0 \text{ as } t \rightarrow \infty,
\]
For convenience, we set $h_j=\sqrt{4\omega_j-c_j^2}$. We rewrite
\begin{equation}
\label{soliton profile}
\Phi_{\omega_j,c_j}(x)=\sqrt{2}h_j\left(\sqrt{c_j^2+\gamma h_j^2}\cosh(h_j x)-c_j\right)^{-\frac{1}{2}}.
\end{equation}
As each soliton is in $H^{\infty}(\R)$, we have $R \in H^{\infty}(\R)$. Our first main result is the following.
\begin{theorem}\label{mainresult}
Let $K \in \N^{*}$ and for each $1 \leq j  \leq K$, let $(\theta_j,c_j,\omega_j)$ be a set of parameters such that $\theta_j \in \R$, $c_j \neq c_k$, for $j \neq k$ and $c_j$ such that $-2\sqrt{\omega_j} < c_j < 2\sqrt{\omega_j}$ if $\gamma>0$ and $-2\sqrt{\omega_j}<c_j<-2s_{*}\sqrt{\omega_j}$ if $\gamma \leq 0$. The multi-soliton profile $R$ is given as in \eqref{profileofinfinitesoliton}. Then there exists a certain positive constant $C_*$ such that if the parameters $(\omega_j,c_j)$ satisfy 
\begin{equation}\label{techiniqueestimate}
C_* \left((1+\norm{R_x}_{L_t^{\infty}L_x^{\infty}})(1+\norm{R}_{L_t^{\infty}L_x^{\infty}})+\norm{R}^4_{L_t^{\infty}L_x^{\infty}}\right) \leq v_{*}:=\mathop{\inf}\limits_{j\neq k} h_j|c_j-c_k|,
\end{equation}
then there exist $T_0>0$ depending on $\omega_1,...,\omega_K,c_1,...,c_K$ and a solution $u$ of \eqref{dNLS} on $[T_0,\infty)$ such that
\begin{equation}\label{estimateneedprove}
\norm{u-R}_{H^1} \leq C e^{-\lambda t}, \quad \forall t \geq T_0,
\end{equation}
where $\lambda=\frac{v_{*}}{16}$ and $C$ is a positive constant depending on the parameters $\omega_1,...,\omega_K,c_1,...,c_K$. 
\end{theorem}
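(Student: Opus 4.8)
The plan is to look for the solution in the form $u=R+v$ and to construct the remainder $v$ on a half-line $[T_0,\infty)$, with $T_0$ large, by a Banach fixed point argument. Writing \eqref{dNLS} as $iu_t+u_{xx}=N(u)$ with $N(u):=-i|u|^2u_x-b|u|^4u$, and using that each $R_j$ solves \eqref{dNLS} exactly while $iR_t+R_{xx}=\sum_{j=1}^{K}N(R_j)$, one finds that $v=u-R$ must solve
\[
iv_t+v_{xx}=\bigl(N(R+v)-N(R)\bigr)+\Bigl(N(R)-\sum_{j=1}^{K}N(R_j)\Bigr)=:\mathcal{L}(v)+E,
\]
where $E$ is the soliton interaction error and $\mathcal{L}(v)$ collects all $v$-dependent terms (and vanishes at $v=0$). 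Since we want $\norm{v(t)}_{H^1}\to 0$, we integrate Duhamel's formula from $+\infty$ and seek a fixed point of
\[
\mathcal{M}(v)(t)=i\int_t^{\infty}e^{i(t-s)\partial_x^2}\bigl(\mathcal{L}(v)(s)+E(s)\bigr)\,ds
\]
in a small ball of a Banach space $X$, the norm of which is an exponentially weighted combination of $\sup_{t\geq T_0}e^{\lambda t}\norm{v(t)}_{H^1}$ and of the Strichartz (and, if needed, local smoothing) norms at the $H^1$ level of $v$ on $[T_0,\infty)$, where $\lambda=v_*/16$.

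Two estimates then drive the argument. First, the interaction error decays much faster than $e^{-\lambda t}$: since $N$ is a finite sum of monomials in $R,\overline R,R_x$ and the purely diagonal contributions cancel against $\sum_j N(R_j)$, every term of $E(t)$ is a product containing at least two \emph{distinct} profiles $R_j,R_k$ (or $R_j,(R_k)_x$). Because $R_j(t,\cdot)$ is concentrated near $x=c_jt$ with $|R_j(t,x)|+|(R_j)_x(t,x)|\lesssim e^{-h_j|x-c_jt|/2}$, such products are $\lesssim e^{-\nu t}$ pointwise and in every $L^r_x$, up to polynomial factors, with $\nu$ of order $\inf_{j\neq k}\min(h_j,h_k)|c_j-c_k|\geq v_*$; in particular $\norm{E(t)}_{H^1}$ and its relevant space-time norms are $O(e^{-\nu t})$ with $\nu>\lambda$, so $\norm{\mathcal{M}(0)}_X\lesssim e^{-(\nu-\lambda)T_0}\leq\rho/2$ for any prescribed $\rho>0$ once $T_0$ is chosen large. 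Second, the contraction estimate. The map $\mathcal{L}$ splits into terms that are linear in $v$ with coefficients built from $R,\overline R,R_x$ (schematically $|R|^2v_x$, $R_xv$, $|R|^4v$, $|R|^2R^2\overline v$ and the $\partial_x$ of these) and into terms that are at least quadratic in $v$. For the linear part, applying Strichartz estimates to $\mathcal{M}$ together with $\norm{v(s)}_{H^1}\lesssim e^{-\lambda s}\norm{v}_X$ and $\int_t^\infty e^{-\lambda s}\,ds=\lambda^{-1}e^{-\lambda t}$ yields, after multiplication by $e^{\lambda t}$, a bound of the form
\[
\lesssim\lambda^{-1}\Bigl[(1+\norm{R_x}_{L^\infty L^\infty})(1+\norm{R}_{L^\infty L^\infty})+\norm{R}_{L^\infty L^\infty}^4\Bigr]\norm{v}_X,
\]
and by hypothesis \eqref{techiniqueestimate} the bracket is $\ll v_*=16\lambda$, so this contributes a Lipschitz constant $<1/2$. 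The quadratic-and-higher terms contribute $\lesssim\lambda^{-1}\rho\,\norm{v_1-v_2}_X$, which is small once $\rho$ is small. Hence $\mathcal{M}$ is a contraction on $\{\norm{v}_X\leq\rho\}$; its fixed point $v$ provides (the Duhamel identity being equivalent to \eqref{dNLS} in this class) a solution $u=R+v$ of \eqref{dNLS} on $[T_0,\infty)$ with $\norm{u(t)-R(t)}_{H^1}=\norm{v(t)}_{H^1}\leq\rho\,e^{-\lambda t}$, which is \eqref{estimateneedprove} with $C=\rho$.

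The main obstacle is the derivative nonlinearity $i|u|^2u_x$: to close the estimates at the $H^1$ level one must control $\partial_x\mathcal{M}(v)$, and the contribution $|R|^2v_{xx}$ cannot be handled by plain Strichartz estimates and integration by parts alone, since these only redistribute the derivative loss. This is overcome by using the gauge transformation — of the same type as the one used above for $\phi_{\omega,c}$ — which trades $i|u|^2u_x$ for power-type (cubic and quintic) nonlinearities, and/or by invoking local smoothing (Kato-type) estimates to absorb the half-derivative loss; in either case one relies crucially on the smallness of $\norm{R}_{L^\infty L^\infty}$ and $\norm{R_x}_{L^\infty L^\infty}$ provided by \eqref{techiniqueestimate}. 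Once this point is settled, checking that $E$ decays strictly faster than $e^{-\lambda t}$ and carrying out the bookkeeping of the finitely many nonlinear monomials are routine, the separation $c_j\neq c_k$ of the velocities being exactly what makes the interaction exponentially small.
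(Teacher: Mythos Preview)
Your outline correctly identifies the main obstacle (the derivative loss coming from $i|u|^2u_x$) and correctly points to the gauge transformation as the remedy, but it stops precisely where the real work begins. The paper does \emph{not} run a fixed point on $v=u-R$; local smoothing alone does not absorb a full derivative here, and the ``and/or'' in your last paragraph is exactly the gap. What the paper does is apply the gauge $\varphi=\exp\bigl(\tfrac{i}{2}\int_{-\infty}^x|u|^2\bigr)u$, $\psi=\varphi_x-\tfrac{i}{2}|\varphi|^2\varphi$, which turns \eqref{dNLS} into a \emph{system} $L\varphi=P(\varphi,\psi)$, $L\psi=Q(\varphi,\psi)$ with polynomial nonlinearities containing \emph{no} derivatives. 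One writes the gauged profile $(h,k)$ from $R$, sets $\eta=(\varphi-h,\psi-k)$, and solves the system for $\eta$ by a standard Strichartz fixed point (Lemma~\ref{lemma2}); this is where the soliton-interaction bound $\norm{E}_{H^2}\lesssim e^{-\lambda t}$ (your first estimate) enters as the source term.

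The point you are missing is that the fixed point produces a solution $(\varphi,\psi)$ of the system, \emph{not} a solution of \eqref{dNLS}: the relation $\psi=\varphi_x-\tfrac{i}{2}|\varphi|^2\varphi$ is not preserved automatically and must be recovered a posteriori. This is the paper's Step~3: one sets $v=\varphi_x-\tfrac{i}{2}|\varphi|^2\varphi$ and derives an equation for $\psi-v$ of the form
\[
L(\psi-v)=A\,(\psi-v)+B\,\overline{(\psi-v)}-i\varphi^2\,\partial_x\overline{(\psi-v)},
\]
with $A,B$ polynomials in $(\varphi,\psi,v,h,k)$. An $L^2$ energy estimate and Gr\"onwall give $\norm{(\psi-v)(t)}_{L^2}^2\lesssim\norm{(\psi-v)(N)}_{L^2}^2\exp\bigl(\int_t^N(\norm{A}_{L^\infty}+\norm{B}_{L^\infty}+\norm{\partial_x\varphi^2}_{L^\infty})\bigr)$. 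It is \emph{here} that \eqref{techiniqueestimate} is used: it guarantees that the integrand is $\ll\lambda$, so that the exponential growth is beaten by $\norm{(\psi-v)(N)}_{L^2}^2\lesssim e^{-2\lambda N}$ and one concludes $\psi\equiv v$ by letting $N\to\infty$. Only then can one invert the gauge and obtain $u$ solving \eqref{dNLS} with $\norm{u-R}_{H^1}\lesssim\norm{\varphi-h}_{H^1}\lesssim e^{-\lambda t}$. Your proposed use of \eqref{techiniqueestimate} (to make the linear-in-$v$ part of the contraction small) is not how the paper uses it, and without the constraint-recovery step a gauge-based argument does not yield a solution of \eqref{dNLS}.
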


We observe that the formula for solitons in the case $\gamma>0$ and in the case $\gamma\leq 0$ is similar. Thus, in the proof of Theorem \ref{mainresult}, we only consider the case $\gamma>0$. The case $\gamma \leq 0$ is treated by similar arguments. 

\begin{remark}\label{remark profile of multi soliton}
We give an example of parameters satisfying \eqref{techiniqueestimate}. Let $d_j <0$, $h_j\in\R$ for all $j\in \{1,2,...,K\}$ such that $d_j \neq d_k$ for all $j\neq k$. Let $(c_j,\omega_j)=\left(Md_j,\frac{1}{4}(h_j^2+M^2d_j^2)\right)$. We prove that for $M$ large enough, the condition \eqref{techiniqueestimate} is satisfied. By this choosing, we have $h_j \ll |c_j|$ and $c_j<0$ for all $j$. We have
\begin{align*}
\norm{\Phi_{\omega_j,c_j}}^2_{L^{\infty}} &\leq \frac{2h_j^2}{\sqrt{c_j^2+\gamma h_j^2}-c_j} \lesssim \frac{h_j^2}{|c_j|}.
\end{align*}
Moreover,
\begin{align*}
\partial \Phi_{\omega_j,c_j} &= \frac{-\sqrt{2}}{2}h_j^2\sqrt{c_j^2+\gamma h_j^2}\sinh(h_j x)\left(\sqrt{c_j^2+\gamma h_j^2}\cosh(h_j x)-c_j\right)^{-\frac{3}{2}}.
\end{align*}
Thus, for all $j$, we obtain
\begin{align*}
|\partial\Phi_{\omega_j,c_j}| &\lesssim h_j^2 \sqrt{c_j^2+\gamma h_j^2}|\sinh(h_j x)|\left(\sqrt{c_j^2+\gamma h_j^2}\cosh(h_j x)-c_j\right)^{-\frac{3}{2}} \\
&\lesssim h_j^2 \left(\sqrt{c_j^2+\gamma h_j^2}\cosh(h_j x)-c_j\right)^{-\frac{1}{2}}\\
&\approx h_j |\Phi_{\omega_j,c_j}| \lesssim \frac{h_j^2}{\sqrt{|c_j|}}. 
\end{align*}
In the addition, we have
\begin{align*}
\norm{\partial R_j}_{L^{\infty}} &= \norm{\partial \phi_{\omega_j,c_j}}_{L^{\infty}}\approx \norm{\partial \Phi_{\omega_j,c_j}}_{L^{\infty}}+\left\lVert\frac{c_j}{2}\Phi_{\omega_j,c_j}-\Phi^3_{\omega_j,c_j}\right\rVert_{L^{\infty}} \\ &\leq \norm{\partial \Phi_{\omega_j,c_j}}_{L^{\infty}}+\frac{|c_j|}{2}\norm{\Phi_{\omega_j,c_j}}_{L^{\infty}}+\norm{\Phi_{\omega_j,c_j}}^3_{L^{\infty}} \\
&\lesssim\frac{h_j^2}{\sqrt{|c_j|}}+h_j \sqrt{|c_j|}+\frac{h_j^3}{\sqrt{|c_j|^3}}.
\end{align*}

Thus, the left hand side of \eqref{techiniqueestimate} is bounded by 
\begin{equation}\label{technique}
C_*\left(\left(1+\sum_{1\leq j\leq K}\left(\frac{h_j^2}{\sqrt{|c_j|}}+h_j \sqrt{|c_j|}+\frac{h_j^3}{\sqrt{|c_j|^3}}\right)\right)\left(1+\sum_{1\leq j\leq K}\frac{h_j}{\sqrt{|c_j|}}\right)+\sum_{1\leq j\leq K}\frac{h_j^4}{c_j^2}\right).
\end{equation}
By our choosing, \eqref{technique} is order $M^{\frac{1}{2}}$ and the right hand side of \eqref{techiniqueestimate} is order $M^1$. Thus, \eqref{techiniqueestimate} is satisfied for $M$ large enough.
%Let $M \gg 1$ be large enough positive constant. Replace $(c_j)$ by $Mc_j$, $h_j$ bounded (hence, $\omega_j=\frac{1}{4}(h_j^2+M^2c_j^2)$) for all $j$. As $M \rightarrow \infty$, the right hand sight of \eqref{technique} is order $M$ and the left side is order $M^{0}$. Thus, for $M$ large enough we obtain the parameters $(c_j,\omega_j)$ satisfy \eqref{techiniqueestimate}.    
\end{remark}

\subsection{Multi kink-solitons}

Second, we consider another special case of \eqref{gemeral} as follows
\begin{equation}
\label{dNLS 2}
iu_t+u_{xx}+iu^2\overline{u_x}+b|u|^4u=0.
\end{equation}
Our goal is to construct multi kink-solitons of \eqref{dNLS 2}. The motivation comes from \cite{CoTs14,CoDoTs15}, where the authors have constructed an infinite multi kink-soliton train for classical nonlinear Schr\"odinger equations by using fixed point arguments. However, in the case of \eqref{dNLS 2}, this method can not directly be used due to the appearing of a derivative term. To overcome this difficulty, use a transformation and work on a system of two equations without derivative nonlinearites.

Consider the equation \eqref{dNLS 2}. First, we would like to define a kink solution of \eqref{dNLS 2}. Let $R_{\omega,c}$ be a smooth solution of \eqref{dNLS 2} of the form:
\begin{equation}\label{R}
R_{\omega,c}(t,x)=e^{i\omega t}\phi_{\omega,c}(x-ct), 
\end{equation}
where $\phi_{\omega,c}$ is smooth and solves
\begin{equation}\label{10}
-\phi_{xx}+\omega \phi+ic\phi_x-i\phi^2\overline{\phi_x}-b|\phi|^4\phi=0, \quad x \in \R.
\end{equation}
If $\phi_{\omega,c}\mid_{\R^{+}} \in H^1(\R^{+})$ then the following Gauge transform is well defined:
\[
\Phi_{\omega,c}= \exp \left(-i\frac{c}{2}x+\frac{i}{4}\int_{\infty}^x|\phi_{\omega,c}(y)|^2 \, dy\right)\phi_{\omega,c}.
\]
Since $\phi_{\omega,c}$ solves \eqref{10}, $\Phi_{\omega,c}$ is smooth and solves
\begin{equation}\label{eqof locolized solution}
-\Phi_{xx}+\left(\omega-\frac{c^2}{4}\right)\Phi-\frac{3}{2}\Im(\overline{\Phi}\Phi_x)\Phi-\frac{c}{2}|\Phi|^2\Phi+\frac{3}{16}\gamma|\Phi|^4\Phi=0, \quad \gamma:=\frac{5}{3}-\frac{16}{3}b.
\end{equation}
Since $\Phi_{\omega,c}\mid_{\R^{+}} \in H^2(\R^{+})$, by similar  arguments as in \cite[Proof of Lemma 2]{CoOh06}, we can prove that 
\[
\Im(\overline{\Phi_{\omega,c}}\partial_x\Phi_{\omega,c})=0.
\]
Thus, $\Phi_{\omega,c}$ solves
\begin{equation}\label{11}
-\Phi_{xx}+\left(\omega-\frac{c^2}{4}\right)\Phi-\frac{c}{2}|\Phi|^2\Phi+\frac{3}{16}\gamma|\Phi|^4\Phi=0.
\end{equation} 
Now, we give the definition of a half-kink of \eqref{dNLS}. 
\begin{definition}\label{defofkink}
The function $R_{\omega,c}$ is called a half-kink solution of \eqref{dNLS} if $R_{\omega,c}$ is of the form \eqref{R} and the associated $\Phi_{\omega,c}$ is a real valued function solving \eqref{11} and satisfying: 
\begin{equation}\label{eqofkinkprofile}
\begin{cases}
\mathop{\lim}\limits_{x \rightarrow \pm\infty}\Phi(x) \neq 0,\\
\mathop{\lim}\limits_{x \rightarrow \mp\infty}\Phi(x) =0,
\end{cases}
\end{equation}
where $\tilde\omega =\omega-\frac{c^2}{4}$, $f: \R \rightarrow \R$. 
\end{definition}
For more convenience, we define
\[
f(s)=\frac{c}{2}s^3-\frac{3}{16}\gamma s^5.
\]
% Question: if $\Phi_{\omega,c}$ solves \eqref{11} then this is positive up to phase shif or not? "later for it".
The following result about the existence of a half-kink profile is stated in \cite{CoDoTs15} as follows.
\begin{proposition}\label{pro existence of half kink profile}
Let $f:\R \rightarrow \R$ be a $C^1$ function with $f(0)=0$ and define $F(s):=\int_0^s f(t)\, dt$. For $\tilde\omega \in \R$, let 
\[
\zeta(\tilde\omega):=\inf\left\{\zeta>0,F(\zeta)-\frac{1}{2}\tilde\omega \zeta^2=0\right\}
\]
and assume that there exists $\tilde\omega_1 \in \R$ such that
\begin{equation}
\zeta(\tilde\omega_1)>0,\quad f'(0)-\tilde\omega_1<0, \quad f(\zeta(\tilde\omega_1))-\tilde\omega_1\zeta(\tilde\omega_1)=0. \label{eqofzeta}
\end{equation}
Then, for $\tilde\omega=\tilde\omega_1$, there exists a half-kink profile $\Phi \in C^2(\R)$ of \eqref{eqofkinkprofile} i.e $\Phi$ is unique (up to translation), positive and satisfies $\Phi'>0$ on $\R$ and the boundary conditions
\begin{equation}\label{conditionofPhi}
\mathop{\lim}\limits_{x\rightarrow -\infty}\Phi(x)=0, \quad  \mathop{\lim}\limits_{x \rightarrow \infty}\Phi(x)=\zeta(\tilde\omega_1)>0.
\end{equation}
If in addition,
\begin{equation}\label{conditionnew for existence profile}
f'(\zeta(\tilde\omega_1))-\tilde\omega_1<0,
\end{equation}
then for any $0<a<\tilde\omega_1-\max\{f'(0),f'(\zeta(\tilde\omega_1))\}$ there exists $D_a>0$ such that
\begin{equation}
\label{behaviour of Phi}
|\Phi'(x)|+|\Phi(x)1_{x<0}|+|(\zeta(\tilde\omega_1)-\Phi(x))1_{x>0}| \leq D_a e^{-a |x|} ,\quad \forall x\in\R.
\end{equation}
\end{proposition}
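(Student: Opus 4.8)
The plan is to read \eqref{eqofkinkprofile} as a conservative one–dimensional Newtonian system and to obtain the profile as the heteroclinic orbit joining two equilibria, via a phase–plane / quadrature argument. Set
\[
V(s):=F(s)-\tfrac12\tilde\omega_1 s^{2},\qquad s\in\R ,
\]
so that $V\in C^{2}$ (because $f\in C^{1}$), with $V'(s)=f(s)-\tilde\omega_1 s$ and $V''(s)=f'(s)-\tilde\omega_1$, and the equation $-\Phi''+\tilde\omega_1\Phi-f(\Phi)=0$ becomes $\Phi''=-V'(\Phi)$; hence the energy $\mathcal E:=\tfrac12(\Phi')^{2}+V(\Phi)$ is constant along solutions. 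I would first record that the three conditions in \eqref{eqofzeta} say precisely: $0$ and $\zeta_1:=\zeta(\tilde\omega_1)>0$ are equilibria ($V'(0)=0$ from $f(0)=0$, $V'(\zeta_1)=0$ from $f(\zeta_1)-\tilde\omega_1\zeta_1=0$) lying on the same level $\mathcal E=0$ (since $V(0)=F(0)=0$ and $V(\zeta_1)=F(\zeta_1)-\tfrac12\tilde\omega_1\zeta_1^{2}=0$ by the very definition of $\zeta(\tilde\omega_1)$, the infimum being attained by continuity), while $V''(0)=f'(0)-\tilde\omega_1<0$ makes $0$ a strict local maximum of $V$, i.e.\ a hyperbolic saddle for the flow. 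Since by minimality $\zeta_1$ is the smallest positive zero of $V$, and $V(s)=\tfrac12 V''(0)s^{2}+o(s^{2})<0$ for small $s>0$, it follows that $V<0$ on the whole open interval $(0,\zeta_1)$.

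Next I would construct the orbit: on $(0,\zeta_1)$ one has $(\Phi')^{2}=-2V(\Phi)>0$, so the increasing branch $\Phi'=\sqrt{-2V(\Phi)}$, normalized by $\Phi(0)=\zeta_1/2$, defines a solution implicitly through $\int_{\zeta_1/2}^{\Phi(x)}\frac{ds}{\sqrt{-2V(s)}}=x$. The key point is that both improper integrals $\int_{0^{+}}\frac{ds}{\sqrt{-2V(s)}}$ and $\int^{\zeta_1^{-}}\frac{ds}{\sqrt{-2V(s)}}$ diverge — near $0$ because $-2V(s)\sim(\tilde\omega_1-f'(0))\,s^{2}$, and near $\zeta_1$ because $V(\zeta_1)=V'(\zeta_1)=0$ together with $V\in C^{2}$ force $-2V(s)=O\big((\zeta_1-s)^{2}\big)$ — so the orbit is defined on all of $\R$, is increasing and positive, and satisfies $\Phi(-\infty)=0$, $\Phi(+\infty)=\zeta_1$; standard ODE regularity then gives $\Phi\in C^{2}(\R)$. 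Uniqueness up to translation follows from the phase portrait: the unstable manifold of the saddle at $0$ is one–dimensional, and conservation of $\mathcal E\equiv 0$ pins the orbit to the curve $\{(\Phi,\sqrt{-2V(\Phi)}):0\le\Phi\le\zeta_1\}$, so any two half–kink profiles differ only by a shift in $x$. This settles the first part of the Proposition, positivity and strict monotonicity being built into the construction.

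For the exponential bound \eqref{behaviour of Phi} I would add hypothesis \eqref{conditionnew for existence profile}, i.e.\ $V''(\zeta_1)=f'(\zeta_1)-\tilde\omega_1<0$, so that $\zeta_1$ is also hyperbolic. Then near $-\infty$ the equation linearizes to $\Phi''\approx(\tilde\omega_1-f'(0))\Phi$ with $\tilde\omega_1-f'(0)>0$, while near $+\infty$, writing $\Psi:=\zeta_1-\Phi$, it linearizes to $\Psi''\approx(\tilde\omega_1-f'(\zeta_1))\Psi$ with $\tilde\omega_1-f'(\zeta_1)>0$; equivalently, one Taylor–expands $-2V$ at its two zeros directly in $\Phi'=\sqrt{-2V(\Phi)}$ and integrates. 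Either way $\Phi,\Phi'$ converge exponentially to $0$ as $x\to-\infty$ and $\zeta_1-\Phi,\Phi'$ converge exponentially to $0$ as $x\to+\infty$; combining the two one–sided estimates with any $a$ admissible in \eqref{behaviour of Phi} (slower than every exponential produced by the linearizations), and using the obvious boundedness of $\Phi'=\sqrt{-2V(\Phi)}$ on compacta, yields \eqref{behaviour of Phi}.

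I expect the main obstacle to be the divergence of the quadrature integral at $\zeta_1$ in the first part — ruling out that the orbit reaches $\zeta_1$ in finite $x$ — since this must hold even without \eqref{conditionnew for existence profile}, when $\zeta_1$ may be a degenerate critical point of $V$; that is exactly where $V(\zeta_1)=V'(\zeta_1)=0$ and $V\in C^{2}$ are used (a merely $C^{1}$ potential would not suffice). The other delicate point is the strict negativity $V<0$ on $(0,\zeta_1)$, which is where the minimality in the definition of $\zeta(\tilde\omega_1)$ genuinely enters.
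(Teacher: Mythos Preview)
The paper does not prove this proposition at all: it is quoted verbatim from \cite{CoDoTs15} and used as a black box. Your phase--plane / quadrature argument is exactly the standard route to such heteroclinic existence results, and the outline you give is correct --- in particular the two points you flag as delicate (strict negativity of $V$ on $(0,\zeta_1)$ via the minimality in the definition of $\zeta(\tilde\omega_1)$, and divergence of $\int^{\zeta_1^-}\!ds/\sqrt{-2V(s)}$ from $V(\zeta_1)=V'(\zeta_1)=0$ and $V\in C^{2}$ alone) are precisely the hinges of the argument, and you handle them correctly.

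One small caution: the linearizations you write down produce decay rates $\sqrt{\tilde\omega_1-f'(0)}$ and $\sqrt{\tilde\omega_1-f'(\zeta_1)}$, whereas the range for $a$ in \eqref{behaviour of Phi} is stated without a square root. Your sentence ``any $a$ admissible in \eqref{behaviour of Phi} (slower than every exponential produced by the linearizations)'' silently assumes these coincide; in fact the stated range can exceed the square--rooted rate when $\tilde\omega_1-\max\{f'(0),f'(\zeta_1)\}>1$. This looks like a typo inherited from the cited reference rather than a defect in your method --- and for the only application made in the paper (Remark~\ref{remark constant a}, where one merely needs $a=\tfrac12$) either version suffices --- but if you are writing out a self--contained proof you should state the rate your argument actually delivers.
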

We have the following remarks.
\begin{remark}\label{remark kink profile}
\item[(1)] As in \cite[Remark 1.15]{CoDoTs15}, using the symmetry $x \rightarrow -x$ and Proposition \ref{pro existence of half kink profile} implies the existence and uniqueness of half-kink profile $\Phi$ satisfying
\[
\mathop{\lim}\limits_{x\rightarrow -\infty}\Phi(x)=\zeta(\tilde\omega_1)>0,\quad \mathop{\lim}\limits_{x\rightarrow\infty}\Phi(x)=0.
\]  
\item[(2)] In our case, $f(s)=\frac{c}{2}s^3-\frac{3}{16}\gamma s^5$. We may check that if $\gamma>0$, $c>0$ then there exist $\tilde\omega_1 = \frac{c^2}{4\gamma}$ and $\zeta(\tilde{\omega}_1)=\sqrt{\frac{2c}{\gamma}}$ satisfying the conditions \eqref{eqofzeta}, \eqref{conditionnew for existence profile} and the definition of the function $\zeta$. Thus, using Proposition \ref{pro existence of half kink profile}, if $\gamma>0$, $c>0$ then there exists a half-kink solution of \eqref{dNLS} and the constant $a$ in Proposition \ref{pro existence of half kink profile} satisfy
\[
0<a<\frac{c^2}{4\gamma}.
\]
\item[(3)] Consider the half-kink profile $\Phi$ of Proposition \ref{pro existence of half kink profile}. Since $\Phi$ solves \eqref{eqofkinkprofile} and satisfies \eqref{behaviour of Phi}, we have
\[
|\Phi''(x)|+|\Phi'''(x)| \leq D_a e^{-a|x|}.
\]
\end{remark}
Now, we assume $\gamma>0$. Let $K>0$, $\theta_0,\omega_0,c_0 \in \R$ be such that $2\sqrt{\omega_0}>c_0>\sqrt{2\gamma}$. For $1 \leq j \leq K$, let $(\theta_j,\omega_j,c_j) \in \R$ be such that $c_j>c_0$, $c_j\neq c_k$ for $j\neq k$, $2\sqrt{\omega_j}>c_j>2s_{*}\sqrt{\omega_j}$ for $s_{*}=\sqrt{\frac{\gamma}{1+\gamma}}$. Set $R_j=e^{i\theta_j}R_{\omega_j,c_j}$, where $R_{\omega_j,c_j} \in H^1(\R)$ is the soliton solution of \eqref{dNLS 2} with the associated profile defined in \eqref{formula Phi}. Let $\Phi_0$ be the half-kink profile given in Remark \ref{remark kink profile} (1) associated with the parameters $\omega_0,c_0$ and $R_{\omega_0,c_0}$ be the associated half-kink solution of \eqref{dNLS 2}. Set $R_0=e^{i\theta}R_{\omega_0,c_0}$. The multi kink-soliton profile of \eqref{dNLS 2} is defined as follows:
\begin{equation}\label{multi kink soliton profile}
V=R_0+\sum_{j=1}^K R_j. 
\end{equation}
Our second main result is the following.
\begin{theorem}\label{mainresult2}
Considering \eqref{dNLS 2}, we assume that $b<\frac{5}{16}$ ($\gamma>0$). Let $V$ be given as in \eqref{multi kink soliton profile}. There exists a certain positive constant $C_*$ such that if the parameters $(\omega_j,c_j)$ satisfy
\begin{equation}\label{condition tecnique of multi kink soliton}
C_*\left(\left(1+\norm{V_x}_{L_t^{\infty}L_x^{\infty}}\right)\left(1+\norm{V}_{L_t^{\infty}L_x^{\infty}}\right)+\norm{V}^4_{L_t^{\infty}L_x^{\infty}}\right) \leq v_*:=\min\left(\inf_{j \neq k}h_j|c_j-c_k|,\inf_{j\neq 0} |c_j-c_0|\right),
\end{equation}
then there exist a solution $u$ to \eqref{dNLS 2} such that
\begin{equation}\label{estimate final}
\norm{u-V}_{H^1} \leq Ce^{-\lambda t}. \quad \forall t\geq T_0,
\end{equation}
where $\lambda=\frac{v_{*}}{16}$ and $C,T_0$ are positive constants depending on the parameters $\omega_0,...,\omega_K,c_0,...,c_K$. 
\end{theorem}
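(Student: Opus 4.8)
The plan is to follow the scheme of Theorem~\ref{mainresult}: recast the construction of $u$ as a fixed point problem for the perturbation $w := u - V$ around the profile, and absorb the derivative nonlinearity by working in Strichartz spaces rather than in $H^1$ alone. Substituting $u = V + w$ into \eqref{dNLS 2}, one finds that $w$ must solve $iw_t + w_{xx} = -E(V) - N(V,w)$, where
\begin{equation*}
E(V) := iV_t + V_{xx} + iV^2\overline{V_x} + b|V|^4 V
\end{equation*}
is the error produced by the profile and $N(V,w) := i(u^2\overline{u_x} - V^2\overline{V_x}) + b(|u|^4u - |V|^4 V)$ gathers the genuinely nonlinear contributions. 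Expanding in powers of $w$, $N(V,w)$ is a finite sum of monomials of the schematic form $c(V,V_x)\,w$, $c(V,V_x)\,w_x$, $c(V,V_x,w)\,w$, \dots, up to $w^2\overline{w_x}$ and $|w|^4 w$; in particular it is at least linear in $(w,w_x)$, and the coefficients built from $V$ and $V_x$ are bounded in $L^\infty$ by powers of $\norm{V}_{L^\infty L^\infty}$ and $\norm{V_x}_{L^\infty L^\infty}$, which are finite since the soliton profiles \eqref{soliton profile} and the half-kink $\Phi_0$, together with their derivatives, are bounded (Remark~\ref{remark kink profile}). Note that $V \notin L^2(\R)$ because the kink does not vanish at $-\infty$, so throughout $V$ is measured in $L^\infty$-based norms while the unknown $w$ is sought in $H^1$; the difference structure of $N$ keeps this consistent.

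The first step is the error estimate. Since each $R_j$, $0 \le j \le K$ (the kink included), solves \eqref{dNLS 2} exactly, the terms $\sum_j(iR_{j,t}+R_{j,xx})$ recombine via those equations and $E(V)$ reduces to the purely interaction form
\begin{equation*}
E(V) = i\Bigl( V^2\overline{V_x} - \sum_{j=0}^{K} R_j^2\overline{R_{j,x}} \Bigr) + b\Bigl( |V|^4 V - \sum_{j=0}^{K} |R_j|^4 R_j \Bigr),
\end{equation*}
so every surviving monomial contains at least two distinct indices, hence at least one soliton factor $R_j$ or $R_{j,x}$ with $j \ge 1$, exponentially localised about $c_j t$. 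Using the explicit $\cosh$-profiles \eqref{soliton profile}, the decay bound \eqref{behaviour of Phi}, and Remark~\ref{remark kink profile} (3), together with $c_j \neq c_k$ and $c_j > c_0$ for $1 \le j \neq k \le K$ --- so that at large time each soliton sits in the region $x > c_0 t$, on which $\Phi_0$ and its derivatives decay, and never meets the nonzero boundary value of the kink --- one obtains $\norm{E(V)(t)}_{L^1 \cap L^2 \cap L^\infty} \lesssim e^{-\lambda' t}$ for $t \ge T_0$ and some $\lambda' > \lambda = v_*/16$. This estimate, and in particular the handling of the kink's non-decaying end, is the one place where the argument genuinely differs from Theorem~\ref{mainresult}.

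The second step is the fixed point. On $[T_0,\infty)$ with zero data prescribed at $t = +\infty$, set
\begin{equation*}
\Psi(w)(t) := -i\int_t^{\infty} e^{i(t-s)\partial_x^2}\bigl( E(V)(s) + N(V,w)(s) \bigr)\,ds,
\end{equation*}
and work in the complete metric space $X$ of functions $w$ on $[T_0,\infty)$ with
\begin{equation*}
\norm{w}_X := \sup_{t \ge T_0} e^{\lambda t}\bigl( \norm{w}_{L^\infty([t,\infty);H^1)} + \norm{w}_{\mathcal S([t,\infty))} \bigr) < \infty,
\end{equation*}
$\mathcal S$ being the intersection of the relevant one-dimensional Strichartz norms applied to $w$ and to $w_x$. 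Applying the inhomogeneous Strichartz estimates, one bounds $\norm{\Psi(w)}_X$ by three contributions: the $X$-type norm of the source $E(V)$, finite and as small as needed once $T_0$ is large, by the error estimate; the part of $N(V,w)$ linear in $(w,w_x)$, whose coefficient loss is $\lesssim (1+\norm{V_x}_{L^\infty L^\infty})(1+\norm{V}_{L^\infty L^\infty}) + \norm{V}^4_{L^\infty L^\infty}$ while integrating the dual Strichartz norm against the weight $e^{\lambda t}$ gains a factor $\sim \lambda^{-1} = 16/v_*$, so that this term contracts precisely because of \eqref{condition tecnique of multi kink soliton}; and the higher-order (in $w$) part of $N$, which on a fixed ball of $X$ carries an extra power of $e^{-\lambda T_0}$ and is negligible for $T_0$ large. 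The same bookkeeping gives the contraction estimate on a small ball. Hence, for $T_0$ large enough, $\Psi$ has a fixed point $w$ in a small ball of $X$, and $u := V + w$ solves \eqref{dNLS 2} on $[T_0,\infty)$ with $\norm{u(t) - V(t)}_{H^1} = \norm{w(t)}_{H^1} \le e^{-\lambda t}\norm{w}_X \le C e^{-\lambda t}$, which is \eqref{estimate final}. The main obstacle is exactly the derivative term $iu^2\overline{u_x}$: a direct contraction in $H^1$ loses a derivative on $w$, forcing the Strichartz framework, and then the crux is verifying that the space-time norms of the interaction and nonlinear terms actually close --- which works only because the exponential time-weight and the comparison \eqref{condition tecnique of multi kink soliton} with $v_*$ are used in tandem. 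The kink-specific wrinkles, namely $V \notin L^2$ and the failure of the kink to decay on one side, are handled respectively by keeping $V$ in $L^\infty$-based norms and by the ordering $c_j > c_0$ of the velocities.
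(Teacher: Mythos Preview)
Your proposal has a genuine gap: the Strichartz framework alone does not absorb the derivative loss in the nonlinearity, and this is precisely the obstacle the paper's proof is designed to avoid. Concretely, in your space $X$ you require control of $w_x$ in Strichartz norms; to close the fixed point you therefore need $\partial_x\Psi(w)$ in $S$, hence $\partial_x N(V,w)$ in the dual space $N$. But $N(V,w)$ already contains $w_x$ (e.g.\ the terms $iV^2\overline{w_x}$ and $iw^2\overline{w_x}$ coming from $iu^2\overline{u_x}$), so $\partial_x N(V,w)$ contains $iV^2\overline{w_{xx}}$ and $iw^2\overline{w_{xx}}$, which are not controlled by $\norm{w}_X$. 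One-dimensional Strichartz estimates carry no smoothing, so there is no way to trade a Strichartz index for a derivative; your sentence ``a direct contraction in $H^1$ loses a derivative on $w$, forcing the Strichartz framework'' identifies the problem but the proposed cure does not work.

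The paper's proof handles this differently. It does not run the fixed point on $w=u-V$ directly; instead it introduces the auxiliary variable $v:=u_x+\tfrac{i}{2}|u|^2u$ (the Hayashi--Ozawa substitution adapted to \eqref{dNLS 2}), so that $(u,v)$ solves a \emph{system} whose nonlinearities $P(u,v),Q(u,v)$ are polynomials in $(u,v,\overline u,\overline v)$ with \emph{no derivatives}. Writing $h=V$, $k=h_x+\tfrac{i}{2}|h|^2h$, one then applies the fixed-point Lemma~\ref{lemma2} to the pair $(\tilde u,\tilde v)=(u-h,v-k)$; here Strichartz estimates suffice because the derivative has been hidden in the new unknown $v$. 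This produces a solution $(u,v)$ of the system, but not yet of \eqref{dNLS 2}: a separate Step~3 argument (a Gr\"onwall estimate on $\norm{\psi-v}_{L^2}$, exactly as in the proof of Theorem~\ref{mainresult}) is needed to show that the algebraic relation $v=u_x+\tfrac{i}{2}|u|^2u$ is preserved. It is in this last step, not in the contraction itself, that the smallness hypothesis \eqref{condition tecnique of multi kink soliton} enters. Your treatment of the profile error $E(V)$ is essentially correct and matches Lemma~\ref{lm100}, but the core mechanism of the proof --- the substitution removing the derivative, followed by the a~posteriori recovery of the constraint --- is missing from your outline.
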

We have some following discussions about the above theorem.
\begin{remark}\label{remark constant a}
\item[(1)] The condition $c_0^2>2\gamma$ in Theorem \ref{mainresult2} is a technical condition and we can remove this. The constant $a$ in Proposition \ref{pro existence of half kink profile} satisfies
\[
0<a<\frac{c_0^2}{4\gamma}.
\]   
Thus, under the condition $c_0^2>2\gamma$, we can choose $a=\frac{1}{2}$. This fact makes the proof easier and we have
\begin{equation}
\label{estimate of Phi 0}
|\Phi_0'''(x)|+|\Phi_0''(x)|+|\Phi_0'(x)|+|\Phi_0(x)1_{x>0}|+\left|\left(\sqrt{\frac{2c_0}{\gamma}}-\Phi_0(x)\right)1_{x<0}\right| \lesssim e^{-\frac{1}{2}|x|}.
\end{equation}

\item[(2)] By similar arguments as above, we can prove that there exists a half-kink solution of \eqref{dNLS} which satisfies the definition \ref{defofkink}. To our knowledge, there are no result about stability or instability of this kind of solution.
 
\item[(3)] Let $\gamma>0$. We give an example of parameters satisfying the condition \eqref{condition tecnique of multi kink soliton} of Theorem \ref{mainresult2}. As in Remark \ref{remark profile of multi soliton}, we have
\[
\Phi_{\omega_j,c_j}=\sqrt{2}h_j\left(\sqrt{c_j^2-\gamma h_j^2}\cosh(h_j x)+c_j\right)^{\frac{-1}{2}} ,\quad \forall j=1,...,K.
\]
Hence, choosing $h_j \ll c_j$, for all $j$, we have
\begin{align*}
\norm{\Phi_{\omega_j,c_j}}^2_{L^{\infty}} &\leq \frac{2h_j^2}{\sqrt{c_j^2-\gamma h_j^2}+c_j}\lesssim \frac{h_j^2}{c_j}.
\end{align*} 
By similar arguments as in Remark \ref{remark profile of multi soliton}, for all $1 \leq j \leq K$, we have
\begin{align*}
\norm{\partial R_j}_{L^{\infty}} &\lesssim \frac{h_j^2}{\sqrt{c_j}}+h_j\sqrt{c_j}+\frac{h_j^3}{\sqrt{c_j^3}}.
\end{align*}
Now, we treat to the case $j=0$. Let $\Phi_0$ be the profile given as in Proposition \ref{pro existence of half kink profile} associated to the parameters $c_0$, $\omega_0$ and $R_0$ be the associated half-kink solution of \eqref{dNLS}. From \eqref{behaviour of Phi}, Remark \ref{remark kink profile} and Remark \ref{remark constant a} we have
\begin{align*}
\norm{\Phi_0}_{L^{\infty}} &\lesssim   \sqrt{c_0},\\
\norm{\partial\Phi_0}_{L^{\infty}} &\lesssim 1,
\end{align*}
Thus,
\begin{align*}
\norm{R_0}_{L^{\infty}L^{\infty}} &\lesssim \sqrt{c_0},\\
\norm{\partial R_0}_{L^{\infty}L^{\infty}} &\lesssim 1+c_0^{\frac{3}{2}} \lesssim c_0^{\frac{3}{2}}.
\end{align*}
This implies that for $h_j \ll c_j$ ($j=1,..,K$) the left hand side of \eqref{condition tecnique of multi kink soliton} is estimated by:
\begin{align*}
C_*\left(\left(1+ c_0^{\frac{3}{2}}+\sum_{j=1}^K \left(\frac{h_j^2}{\sqrt{c_j}}+h_j\sqrt{c_j}+\frac{h_j^3}{\sqrt{c_j^3}}\right) \right)\left(1+\sqrt{c_0}+\sum_{j=1}^K\frac{h_j}{\sqrt{c_j}}\right)\right).
\end{align*}
Choosing $c_0 \approx 1$, the above expression is estimated by:
\begin{equation}
\label{eq1-8new}
C_*\left(\left(1+ \sum_{j=1}^K \left(\frac{h_j^2}{\sqrt{c_j}}+h_j\sqrt{c_j}+\frac{h_j^3}{\sqrt{c_j^3}}\right) \right)\left(1+\sum_{j=1}^K\frac{h_j}{\sqrt{c_j}}\right)\right). 
\end{equation}
Let $h_j,d_j \in \R^{+}$, $d_j \neq d_k$ for all $j\neq k$, $1\leq j,k\leq K$. Set $c_j=Md_j$, $\omega_j=\frac{1}{4}(h_j^2+M^2d_j^2)$. We have \eqref{eq1-8new} is of order $M^0$ and the right hand side of \eqref{condition tecnique of multi kink soliton} is of order $M^1$. Thus, by these choices of parameters, when $M$ is large enough, the condition \eqref{condition tecnique of multi kink soliton} is satisfied.   
\end{remark}

The proof of Theorem \ref{mainresult2} uses similar arguments as in the one of Theorem \ref{mainresult}. To prove Theorem \ref{mainresult}, our strategy is the following. Let $R$ be the multi-soliton profile. Our aim is to construct a solution of \eqref{dNLS} which behaves as $R$ at large times. Using the Gauge transform \eqref{gauge transform 1}, we construct a system of equations of $(\varphi,\psi)$. Let $h,k$ be the profile under the Gauge transform of $R$. We see that $h,k$ solves the same system as $\varphi,\psi$ up to exponential decay pertubations. The decay of these terms is showed by using the separation of solitons. Set $\tilde\varphi=\varphi-h$ and $\tilde\psi=\psi-k$. We see that if $u$ solves \eqref{dNLS} then $(\tilde\varphi,\tilde\psi)$ solves \eqref{systemneedtosolve}. By using the Banach fixed point theorem, we show that there exists a solution of this system which decays exponentially fast at infinity. Using this property and combining with the condition \eqref{techiniqueestimate}, we may prove a relation between $\tilde\varphi$ and $\tilde\psi$. This relation allows us to obtain a solution of \eqref{dNLS} satisfying the desired property.    

This chapter is organized as follows. In the section \ref{section4.2}, we prove the existence of multi-solitons for the equation \eqref{dNLS}. In the section \ref{section 4.3}, we prove the existence of multi kink-solitons for the equation \eqref{dNLS 2}. In the section \ref{chapter technique}, we prove some tools which is used in the proofs in the section \ref{section4.2} and the section \ref{section 4.3}. More precisely, we prove the exponential decay of the pertubations in the equations of $h,k$ (Lemma \ref{lemma1}, Lemma \ref{lm100}) and the existence of exponential decay solutions of the systems considered in the proofs of the main results in the section \ref{section4.2} (Lemma \ref{lemma2}).   

Before proving the main results, we recall Strichartz estimates and introduce some notations used in this chapter. We need the following definition of admissible pairs. 
\begin{definition}
Let $N\in\N^{*}$. We say that a pair $(q,r)$ is admissible if 
\[
\frac{2}{q}=N\left(\frac{1}{2}-\frac{1}{r}\right),
\]
and
\[
2\leq r \leq \frac{2N}{N-2} \quad (2\leq r\leq\infty \text{ if } N=1 \, 2\leq r <\infty \text{ if } N=2).
\]
\end{definition}

\begin{lemma} (Strichartz estimates)(see e.g \cite[Theorem 2.3.3]{Ca03})
Let $S(t)$ be the Schr\"odinger group. The following properties holds:
\begin{itemize}
\item[(i)] There exists a constant $C$ such that for all $\varphi\in L^2(\R^N)$, we have
\[
\norm{S(\cdot)\varphi}_{L^q(\R,L^r)} \leq C\norm{\varphi}_{L^2},
\]
for every admissible pair $(q,r)$.
\item[(ii)] Let $I$ be an interval of $\R$ and $t_0\in\overline{I}$. Let $(\gamma,\rho)$ be an admissible pair and $f\in L^{\gamma'}(I,L^{\rho'}(\R^N))$. Then, for all admissible pair $(q,r)$, the function
\[
t\mapsto \Phi_f(t)=\int_{t_0}^t S(t-s)f(s)\,ds
\]
belong to $L^q(I,L^r(\R^N))\cap C(I,L^2(\R^N))$. Moreover, there exists a constant $C$ independent of $I$ such that
\[
\norm{\Phi_f}_{L^q(I,L^r)}\leq C\norm{f}_{L^{\gamma'}(I,L^{\rho'})}, \quad \text{for all } f\in L^{\gamma'}(I,L^{\rho'}(\R^N)).
\]
\end{itemize}
\end{lemma}

\begin{notation}
\label{notation}

%\begin{itemize}
\item[(1)] For $t>0$, the Strichartz space $S([t,\infty))$ is defined via the norm
\begin{align*}
\norm{u}_{S([t,\infty))}&=\mathop{\sup}\limits_{(q,r) \text{ admissible}}\norm{u}_{L^q_{\tau}L^r_x([t,\infty) \times \R)}
\end{align*}
The dual space is denoted by $N([t,\infty))=S([t,\infty))^{*}$.

\item[(2)] For $z=(a,b) \in \C^2$ a vector, we denote $|z|=|a|+|b|$.

\item[(3)] We denote $a \lesssim b$, for $a,b>0$, if $a$ is smaller than $b$ up to multiplication by a positive constant. Moreover, we denote $a \approx b$ if $a$ equal to $b$ up to multiplication by a positive constant.   

\item[(4)] We denote $a \lesssim_k b$ if there exists a constant $C(k)$ depending only on $k$ such that $a \leq C(k)b$. \\
Particularly, we denote $a \lesssim_p b$ if there exists a constant $C$ depending only on the parameters $\omega_1,...,\omega_K,c_1,...,c_K$ such that $a \leq Cb$.

\item[(5)] Let $f \in C^1(\R)$. We use $\partial f$ or $f_x$ to denote the derivative in space of the function $f$. 

\item[(6)] Let $f(x,y,z,..)$ be a function. We denote $|df|=|f_x|+|f_y|+|f_z|+...$.
%\end{itemize}
%\item[(5)] Let $f: \R \rightarrow \C$ such that $f\mid_{\R^{+}} \in H^1(\R^{+})$. We use $\norm{f}_{H^1(\R^{+})}$ to denote the $H^1(\R^{+})$-norm of restriction of $f$ on $\R^{+}$.  
\end{notation}

\section{Proof of Theorem \ref{mainresult}}
\label{section4.2}

In this section, we give the proof of Theorem \ref{mainresult}. We divide our proof into three steps.

\textbf{Step 1. Preliminary analysis}

Considering the following transform:
\begin{equation}
\label{gauge transform 1}
\begin{cases}
\varphi(t,x)=\exp \left(\frac{i}{2}\int_{-\infty}^{x}|u(t,y)|^2\, dy\right)u(t,x),\\
\psi =\partial \varphi-\frac{i}{2}|\varphi|^2\varphi.
\end{cases}
\end{equation}
By similar arguments as in \cite{HaOz94} and \cite{Oz96}, we see that if $u$ solves \eqref{dNLS} then $(\varphi,\psi)$ solves the following system
\begin{equation}
\label{systemtransform}
\begin{cases}
L\varphi=i\varphi^2\overline{\psi}-b|\varphi|^4\varphi,\\
L\psi=-i\psi^2\overline{\varphi}-3b|\varphi|^4\psi-2b|\varphi|^2\varphi^2\overline{\psi},\\
\varphi\mid_{t=0} = \varphi_0=\exp \left(\frac{i}{2}\int_{-\infty}^x|u_0(y)|^2\, dy \right)u_0,\\
\psi\mid_{t=0}=\psi_0=\partial\varphi_0-\frac{i}{2}|\varphi_0|^2\varphi_0,
\end{cases}
\end{equation}  
where $L=i\partial_t+\partial_{xx}$. Define
\begin{align*}
P(\varphi,\psi)&=i\varphi^2\overline{\psi}-b|\varphi|^4\varphi,\\
Q(\varphi,\psi)&=-i\psi^2\overline{\varphi}-3b|\varphi|^4\psi-2b|\varphi|^2\varphi^2\overline{\psi}.
\end{align*}
Let $R$ be the multi soliton profile given in \eqref{profileofinfinitesoliton}. Since $R_j$ solves \eqref{dNLS}, for all $j$, by an elementary calculation, we have
\begin{equation}\label{eqR}
iR_t+R_{xx}+i|R|^2R_x+b|R|^4R=i\left(|R|^2R_x-\sum_{j=1}^{K}|R_j|^2R_{jx}\right)+b\left(|R|^4R-\sum_{j=1}^{K}|R_j|^4R_j\right).
\end{equation}  
From Lemma \ref{lemma1}, we have
\begin{equation}\label{eqestimate123}
\left\lVert|R|^2R_x-\sum_{j=1}^{K}|R_j|^2R_{jx}\right\rVert_{H^2} +\left\lVert|R|^4R-\sum_{j=1}^{K}|R_j|^4R_j\right\rVert_{H^2} \leq e^{-\lambda t},
\end{equation}
where $\lambda =\frac{1}{16} v_{*}$. Thus, we rewrite \eqref{eqR} as follows
\begin{equation}
\label{eqRnew}
iR_t+R_{xx}+i|R|^2R_x+b|R|^4R=e^{-\lambda t}v(t,x),
\end{equation}
where $v(t) \in H^2(\R)$ is such that $\norm{v(t)}_{H^2}$ is uniformly bounded in $t$. Define
\begin{align}
h(t,x)&= \exp\left(\frac{i}{2}\int_{-\infty}^x|R|^2\, dy\right)R(t,x), \label{eqof h}\\
k&= h_x-\frac{i}{2}|h|^2h. \label{eqof k}
\end{align} 
By an elementary calculation, we have
\begin{align*}
Lh&=ih^2\overline{k}-b|h|^4h+e^{-t\lambda}m(t,x)=P(h,k)+e^{-t\lambda}m(t,x),\\
Lk&=-ik^2\overline{h}-3b|h|^4k-2b|h|^2h^2\overline{k}+e^{-t\lambda}n(t,x)=Q(h,k)+e^{-t\lambda}n(t,x),
\end{align*}
where  $m,n$ satisfy
\begin{align}
m&=v\exp\left(\frac{i}{2}\int_{-\infty}^x|R|^2\,dy\right)-h\int_{-\infty}^x\Im(v\overline{R})\, dy \label{expressionofmn1},\\
n&=m_x-i|h|^2m+\frac{i}{2}h^2\overline{m}. \label{expressionofmn2} 
\end{align}
From Lemma \ref{lm4}, we have $\norm{m(t)}_{H^1}+\norm{n(t)}_{H^1}$ uniformly bounded in $t$. Set $\tilde \varphi =\varphi-h$ and $\tilde \psi=\psi-k$. Then $\tilde\varphi$, $\tilde\psi$ solve:
\begin{equation}
\label{systemneedtosolve}
\begin{cases}
L\tilde\varphi=P(\varphi,\psi)-P(h,k)-e^{-t\lambda}m(t,x),\\
L\tilde\psi=Q(\varphi,\psi)-Q(h,k)-e^{-t\lambda}n(t,x).
\end{cases} 
\end{equation}
Set $\eta=(\tilde\varphi,\tilde\psi)$, $W=(h,k)$, $H=-e^{-t\lambda}(m,n)$ and $f(\varphi,\psi)=(P(\varphi,\psi),Q(\varphi,\psi))$. We express solutions of \eqref{systemneedtosolve} in the following form:
\begin{equation}\label{eqof eta}
\eta(t) = i\int_t^{\infty}S(t-s)[f(W+\eta)-f(W)+H](s)\,ds,
\end{equation}
where $S(t)$ is the Schr\"odinger group. Moreover, by using $\psi=\partial \varphi-\frac{i}{2}|\varphi|^2\varphi$, we have
\begin{equation}
\label{relation}
\tilde\psi=\partial\tilde\varphi-\frac{i}{2}(|\tilde\varphi+h|^2(\tilde\varphi+h)-|h|^2h).
\end{equation}

\textbf{Step 2. Existence a solution of \eqref{systemneedtosolve}}

From Lemma \ref{lemma2}, there exists $T_{*} \gg 1 $ such that for $T_0 \geq T_{*}$ there exists a unique solution $\eta$ defined on $[T_0,\infty)$ of \eqref{systemneedtosolve} such that 
\begin{equation}\label{estimateofsolutionforsystem 1}
e^{t\lambda}(\norm{\eta}_{S([t,\infty)) \times S([t,\infty))})+e^{t \lambda}(\norm{\eta_x}_{S([t,\infty)) \times S([t,\infty))}) \leq 1, \quad \forall t\geq T_0,
\end{equation}
Thus, for all $t \geq T_0$, we have
\begin{align}\label{estimateofsolutionforsystem 2}
\norm{\tilde\varphi}_{H^1}+\norm{\tilde\psi}_{H^1} \lesssim e^{-\lambda t},
\end{align}  

\textbf{Step 3. Existence of multi-solitons} 

Let $\eta$ be the solution of \eqref{systemneedtosolve} found in step 1. We prove that the solution $\eta=(\tilde\varphi,\tilde\psi)$ of \eqref{systemneedtosolve} satisfies the relation \eqref{relation}. Set $\varphi=\tilde\varphi+h$, $\psi=\tilde\psi+k$ and 
\[
v=\partial \varphi-\frac{i}{2}|\varphi|^2\varphi.
\]
Since $h$ solves $Lh=P(h,k)+e^{-t\lambda}m(t,x)$ and $\tilde\varphi$ solves $L\tilde\varphi=P(\varphi,\psi)-P(h,k)-e^{-t\theta}m(t,x)$, we have $L\varphi=P(\varphi,\psi)$. Similarly, $L\psi=Q(\varphi,\psi)$. We have
\begin{equation*}
\begin{cases}
L\varphi=P(\varphi,\psi),\\
L\psi=Q(\varphi,\psi).
\end{cases}
\end{equation*}
Thus,
\begin{align}
L\psi-Lv&=Q(\varphi,\psi) - \left(\partial L\varphi-\frac{i}{2}L(|\varphi|^2\varphi)\right) \nonumber\\
&= Q(\varphi,\psi)-\left(\partial L\varphi - \frac{i}{2}(L(\varphi^2)\overline{\varphi}+\varphi^2L(\overline{\varphi}) +2\partial(\varphi^2)\partial\overline{\varphi})\right) \nonumber\\
&= Q(\varphi,\psi)-\left(\partial L\varphi-\frac{i}{2}(2L\varphi |\varphi|^2+2(\partial\varphi)^2\overline{\varphi}-\varphi^2\overline{L\varphi}+2\varphi^2\partial_{xx}\overline{\varphi})+4\varphi|\partial\varphi|^2)\right). \label{eqfinal}
\end{align}
Moreover,
\begin{align}
L\varphi &=P(\varphi,\psi)=i\varphi^2\overline{\psi}-b|\varphi|^4\varphi \nonumber\\
&=i\varphi^2\overline{(\psi-v)}+i\varphi^2\overline{v}-b|\varphi|^4\varphi.\label{eqoffinal}
\end{align}
Combining \eqref{eqoffinal} and \eqref{eqfinal} and by an elementary calculation, we obtain
\begin{align}
L\psi-Lv&=Q(\varphi,\psi)-\partial(i\varphi^2\overline{(\psi-v)})-|\varphi|^2\varphi^2\overline{(\psi-v)}-\frac{1}{2}|\varphi|^4(\psi-v)-Q(\varphi,v) \nonumber\\
&= (Q(\varphi,\psi)-Q(\varphi,v))-2i\varphi\partial\varphi\overline{(\psi-v)}-i\varphi^2\partial\overline{(\psi-v)}\nonumber\\
&\quad-|\varphi|^2\varphi^2\overline{(\psi-v)}-\frac{1}{2}|\varphi|^4(\psi-v)\nonumber\\
&=-i(\psi^2-v^2)\overline{\varphi}-3b|\varphi|^4(\psi-v)-2b|\varphi|^2\varphi^2\overline{(\psi-v)}\nonumber\\
&\quad -2i\varphi\left(v+\frac{i}{2}|\varphi|^2\varphi\right)\overline{(\psi-v)}-i\varphi^2\partial\overline{(\psi-v)}\nonumber\\
&\quad -|\varphi|^2\varphi^2\overline{(\psi-v)}-\frac{1}{2}|\varphi|^4(\psi-v). \label{eq}
\end{align}

Define $\tilde v= v-k$. Since $\tilde \psi-\tilde v=\psi-v$ and \eqref{eq} we have
\begin{equation}\label{eqnew}
L\tilde\psi-L\tilde v=(\tilde\psi-\tilde v)A(\tilde\psi,\tilde v,\tilde \varphi,h,k)+\overline{(\tilde\psi-\tilde v)}B(\tilde\psi,\tilde v,\tilde \varphi,h,k)-i(\tilde\varphi + h)^2\partial\overline{(\tilde\psi-\tilde v)},
\end{equation}
where 
\begin{align*}
A&= -i(\tilde \psi+\tilde v+2k)(\overline{\tilde \varphi+h})-3b|\tilde \varphi+h|^4-\frac{1}{2}|\tilde \varphi+h|^4,\\
B&=-2b|\tilde\varphi+h|^2(\tilde\varphi+h)^2-2i(\tilde\varphi+h)\left(\tilde v+k+\frac{i}{2}|\tilde\varphi+h|^2(\tilde\varphi+h)\right)-|\tilde \varphi+h|^2(\tilde\varphi+h)^2.
\end{align*}
We see that $A,B$ are polynomials of degree at most 4 in $(\tilde\psi,\tilde v,\tilde \varphi,h,k)$. Multiplying both sides of \eqref{eqnew} by $\overline{\tilde\psi-\tilde v}$ then taking imaginary part and integrating over space using integration by parts, we obtain
\begin{align*}
\frac{1}{2}\partial_t\norm{\tilde\psi-\tilde v}^2_{L^2}&= \Im\int_{\R}(\tilde\psi-\tilde v)^2 A(\tilde\psi,\tilde v,\tilde \varphi,h,k) + \overline{(\tilde\psi-\tilde v)}^2B(\tilde\psi,\tilde v,\tilde \varphi,h,k)\\
&\quad +\frac{i}{2}\partial(\tilde\varphi+ h)^2\overline{(\tilde\psi-\tilde v)}^2\, dx.
\end{align*}
Thus,
\begin{align*}
\left|\frac{1}{2}\partial_t\norm{\tilde\psi-\tilde v}^2_{L^2}\right|&\lesssim \norm{\tilde \psi-\tilde v}^2_{L^2}(\norm{A}_{L^{\infty}}+\norm{B}_{L^{\infty}}+\norm{\partial(\tilde\varphi+ h)^2}_{L^{\infty}}).
\end{align*}
%This implies that
%\begin{align}
%\left|\frac{\partial_t\norm{\tilde\psi-\tilde v}^2_{L^2}}{\norm{\tilde\psi-\tilde v}^2_{L^2}}\right|\lesssim \norm{A}_{L^{\infty}}+\norm{B}_{L^{\infty}}+\norm{\partial(\tilde\varphi+ h)^2}_{L^{\infty}}).\label{5421}
%\end{align}
%Hence, for $N>t$, integrating from $t$ to $N$ both sides of \eqref{5421}, we have
%\begin{align*}
%\left|\int_t^N \frac{d}{d_t}\text{Ln}(\norm{\tilde\psi-\tilde v}^2_{L^2}) \,ds \right|&=\left|\int_t^N \frac{\partial_t\norm{\tilde\psi-\tilde v}^2_{L^2}}{\norm{\tilde\psi-\tilde v}^2_{L^2}}\,ds\right| \leq \int_t^N\left|\frac{\partial_t\norm{\tilde\psi-\tilde v}^2_{L^2}}{\norm{\tilde\psi-\tilde v}^2_{L^2}}\right| \, ds\\
%&\quad \lesssim \int_t^N (\norm{A}_{L^{\infty}}+\norm{B}_{L^{\infty}}+\norm{\partial(\tilde\varphi+ h)^2}_{L^{\infty}}) \,ds.
%\end{align*}
%This implies that
%\begin{align*}
%&\text{Ln}(\norm{\tilde\psi(t)-\tilde v(t)}^2_{L^2})-\text{Ln}(\norm{\tilde\psi(N)-\tilde v(N)}^2_{L^2}) \\
%&\leq \left|\int_t^N \frac{d}{d_t}\text{Ln}(\norm{\tilde\psi-\tilde v}^2_{L^2}) \,ds \right|\\
%&\lesssim \int_t^N (\norm{A}_{L^{\infty}}+\norm{B}_{L^{\infty}}+\norm{\partial(\tilde\varphi+ h)^2}_{L^{\infty}}) \,ds
%\end{align*}
By using Gr\"onwall inequality, we obtain
\begin{align}
&\norm{\tilde \psi(t)-\tilde v(t)}^2_{L^2}\nonumber\\
&\lesssim \norm{\tilde \psi(N)-\tilde v(N)}^2_{L^2} \exp \left(\int_t^N (\norm{A}_{L^{\infty}}+\norm{B}_{L^{\infty}}+\norm{\partial(\tilde\varphi+h)^2}_{L^{\infty}}\, ds\right).\label{eq1595}
\end{align}
Combining \eqref{estimateofsolutionforsystem 1}, \eqref{estimateofsolutionforsystem 2}, using $k=h_x-\frac{i}{2}|h|^2h$, $\tilde v=\partial\tilde\varphi-\frac{i}{2}(|\tilde\varphi+h|^2(\tilde\varphi+h)-|h|^2h)$, $|h|=|R|$ and the Sobolev embedding $H^1(\R) \hookrightarrow L^{\infty}$, we have, for $t \geq T_0$:
\begin{align*}
\norm{\tilde\varphi+h}_{L^{\infty}} &\lesssim 1+\norm{h}_{L^{\infty}},\\
\norm{\tilde v}_{L^{\infty}}&=\left\lVert\partial\tilde\varphi-\frac{i}{2}(|\tilde\varphi+h|^2(\tilde\varphi+h)-|h|^2h)\right\rVert_{L^{\infty}}\lesssim \norm{\partial\tilde\varphi}_{L^{\infty}}+\norm{\tilde\varphi}^3_{L^{\infty}}+\norm{\tilde\varphi}_{L^{\infty}}\norm{h}^2_{L^{\infty}}\\
&\quad\lesssim 1+\norm{\partial\tilde\varphi}_{L^{\infty}}+\norm{h}^2_{L^{\infty}}.
\end{align*}
Thus,
\begin{align*}
&\int_t^N(\norm{A}_{L^{\infty}}+\norm{B}_{L^{\infty}}+\norm{\partial(\tilde\varphi+ h)^2}_{L^{\infty}})\, ds\\
&\lesssim \int_t^N (\norm{\tilde\psi}_{L^{\infty}}+\norm{\tilde v}_{L^{\infty}}+\norm{k}_{L^{\infty}})\norm{\tilde\varphi+h}_{L^{\infty}}+\norm{\tilde{\varphi}+h}_{L^{\infty}}^4+\norm{\tilde{\varphi}+h}_{L^{\infty}}\norm{\tilde{v}+k}_{L^{\infty}}\\
&\quad +(\norm{\tilde{\varphi}}_{L^{\infty}}+\norm{h}_{L^{\infty}})(\norm{\partial\tilde{\varphi}}_{L^{\infty}}+\norm{h_x}_{L^{\infty}})\,ds\\
&\lesssim \int_t^N (1+\norm{\tilde v}_{L^{\infty}}+\norm{k}_{L^{\infty}})(1+\norm{h}_{L^{\infty}})+1+\norm{h}^4_{L^{\infty}}+(1+\norm{h}_{L^{\infty}})(\norm{\tilde{v}}_{L^{\infty}}+\norm{k}_{L^{\infty}}) \\
&+(1+\norm{h}_{L^{\infty}})(\norm{\partial\tilde{\varphi}}_{L^{\infty}}+\norm{h_x}_{L^{\infty}})\,ds\\
&\lesssim \int_t^N 1+\norm{h}^4_{L^{\infty}}+\norm{k}_{L^{\infty}}(1+\norm{h}_{L^{\infty}})+\norm{\tilde{v}}_{L^{\infty}}(1+\norm{h}_{L^{\infty}})\\
&\quad+(1+\norm{h}_{L^{\infty}})(\norm{\partial\tilde{\varphi}}_{L^{\infty}}+\norm{h_x}_{L^{\infty}}) \,ds\\
&\lesssim \int_t^N 1+\norm{h}^4_{L^{\infty}}+\norm{k}_{L^{\infty}}(1+\norm{h}_{L^{\infty}})+\norm{\partial\tilde{\varphi}}_{L^{\infty}}(1+\norm{h}_{L^{\infty}}) \\
&\quad+(1+\norm{h}_{L^{\infty}})(\norm{\partial\tilde{\varphi}}_{L^{\infty}}+\norm{k}_{L^{\infty}}+\norm{h}^3_{L^{\infty}})\,ds\\
&\lesssim \int_t^N 1+\norm{h}^4_{L^{\infty}}+\norm{k}_{L^{\infty}}(1+\norm{h}_{L^{\infty}})+\norm{\partial\tilde{\varphi}}_{L^{\infty}}(1+\norm{h}_{L^{\infty}})\,ds\\
&\lesssim (N-t)(1+\norm{h}^4_{L^{\infty}L^{\infty}}+\norm{k}_{L^{\infty}L^{\infty}}(1+\norm{h}_{L^{\infty}L^{\infty}}))\\
&\quad +\norm{\partial\tilde{\varphi}}_{L^4(t,N)L^{\infty}}(\norm{1}_{L^{\frac{4}{3}}(t,N)}+\norm{h}_{L^{\frac{4}{3}}(t,N)L^{\infty}})\\
&\lesssim (N-t)(1+\norm{R}^4_{L^{\infty}L^{\infty}}+(\norm{h_x}_{L^{\infty}L^{\infty}}+\norm{R}^3_{L^{\infty}L^{\infty}})(1+\norm{R}_{L^{\infty}L^{\infty}}))\\
&\quad + (N-t)^{\frac{3}{4}}(1+\norm{R}_{L^{\infty}L^{\infty}}^{\frac{4}{3}})\\
&\lesssim (N-t)(1+\norm{R}_{L^{\infty}L^{\infty}}^4+\norm{R_x}_{L^{\infty}L^{\infty}}(1+\norm{R}_{L^{\infty}L^{\infty}})) + (N-t)^{\frac{3}{4}}(1+\norm{R}_{L^{\infty}L^{\infty}}^{\frac{4}{3}}).
\end{align*}
Thus, there exists a certain positive constant $C_0$ such that
\begin{align*}
&\int_t^N(\norm{A}_{L^{\infty}}+\norm{B}_{L^{\infty}}+\norm{\partial(\tilde\varphi+ h)^2}_{L^{\infty}})\, ds\\
&\leq C_0\left( (N-t)(1+\norm{R}_{L^{\infty}L^{\infty}}^4+\norm{R_x}_{L^{\infty}L^{\infty}}(1+\norm{R}_{L^{\infty}L^{\infty}})) + (N-t)^{\frac{3}{4}}(1+\norm{R}_{L^{\infty}L^{\infty}}^{\frac{4}{3}})\right).
\end{align*}
Let $C_*=32C_0$. From the assumption \eqref{techiniqueestimate}, we have 
\[
C_0 \left((1+\norm{R_x}_{L^{\infty}L^{\infty}})(1+\norm{R}_{L^{\infty}L^{\infty}})+\norm{R}^4_{L^{\infty}L^{\infty}}\right) \leq  \frac{v_*}{32}=\frac{\lambda}{2}.
\] 
Hence, fix $t$ and let $N$ large enough, we have
\[
\int_t^N(\norm{A}_{L^{\infty}}+\norm{B}_{L^{\infty}}+\norm{\partial(\tilde\varphi+ h)^2}_{L^{\infty}})\, ds \leq (N-t)\lambda.
\]
Combining with \eqref{estimateofsolutionforsystem 2} and \eqref{eq1595}, we obtain, for $N$ large enough: 
\begin{align*}
\norm{\tilde \psi(t)-\tilde v(t)}^2_{L^2}  & \lesssim e^{-2\lambda N}e^{(N-t)\lambda}= e^{-\lambda N-t\lambda}.
\end{align*} 
Let $N \rightarrow \infty$, we obtain
\[
\norm{\tilde \psi(t)-\tilde v(t)}^2_{L^2}=0.
\]
This implies that $\tilde \psi=\tilde v$ and we have 
\begin{equation}\label{conservationofrelation}
\psi=v=\partial \varphi-\frac{i}{2}|\varphi|^2\varphi.
\end{equation} 
Define $u=\exp\left(-\frac{i}{2}\int_{-\infty}^x|\varphi(y)|^2\, dy\right)\varphi$. Combining \eqref{conservationofrelation} with the fact that $(\varphi,\psi)$ solves
\begin{equation*}
\begin{cases}
L\varphi=P(\varphi,\psi),\\
L\psi=Q(\varphi,\psi),
\end{cases}
\end{equation*}
we obtain that $u$ solves \eqref{dNLS}. Moreover,
\begin{align*}
\norm{u-R}_{H^1} &= \left\lVert\exp\left(-\frac{i}{2}\int_{-\infty}^x|\varphi(y)|^2\,dy\right)\varphi-\exp\left(-\frac{i}{2}\int_{-\infty}^x|h(y)|^2\,dy\right)h\right\rVert_{H^1}\\
&\lesssim \norm{\varphi-h}_{H^1}=\norm{\tilde\varphi}_{H^1}
\end{align*}
Combining with \eqref{estimateofsolutionforsystem 2}, for $t \geq T_0$, we have
\[
\norm{u-R}_{H^1} \leq C e^{-\lambda t},
\]
for a constant $C$ depending on the parameters $\omega_1,...,\omega_K,c_1,...,c_K$. This completes the proof of Theorem \ref{mainresult}.

\section{Proof of Theorem \ref{mainresult2}}\label{section 4.3}
In this section, we prove Theorem \ref{mainresult2}. We use the similar idea in the proof of Theorem \ref{mainresult}. However, the argument used in this section cannot apply to \eqref{dNLS} (see Remark \ref{rm10}). We divide our proof into three steps:

\textbf{Step 1. Preliminary analysis}

Set
\[
v:= u_x+\frac{i}{2}|u|^2u.
\]
By an elementary calculation, we see that if $u$ solves \eqref{dNLS} then $(u,v)$ solves the following system:
\begin{equation}
\begin{cases}
Lu=-iu^2\overline{v}+\left(\frac{1}{2}-b\right)|u|^4u,\\
Lv=iv^2\overline{u}+\left(\frac{3}{2}-3b\right)|u|^4v+(1-2b)|u|^2u^2\overline{v},\\
u\mid_{t=0}=u_0,\\
v\mid_{t=0}=v_0=\partial u_0+\frac{i}{2}|u_0|^2u_0.
\end{cases}
\end{equation}
Define
\begin{align*}
P(u,v)&=-iu^2\overline{v}+\left(\frac{1}{2}-b\right)|u|^4u,\\
Q(u,v)&=iv^2\overline{u}+\left(\frac{3}{2}-3b\right)|u|^4v+(1-2b)|u|^2u^2\overline{v}.
\end{align*}
Let $V$ be the multi kink-soliton profile defined in \eqref{multi kink soliton profile}. Since $R_j$ solves \eqref{dNLS}, for all $j$, by an elementary calculation, we have
\begin{equation}
\label{eq2-2new}
iV_t+V_{xx}+iV^2\overline{V_x}+b|V|^4V=i\left(V^2\overline{V_x}-\sum_{j=0}^{K}R_j^2\overline{R_{jx}}\right)+b\left(|V|^4V-\sum_{j=0}^{K}|R_j|^4R_j\right).
\end{equation}
From Lemma \ref{lm100}, we have
\begin{equation}
\label{eq2-3new}
\left\lVert V^2\overline{V_x}-\sum_{j=0}^{K}R_j^2\overline{R_{jx}}\right\rVert_{H^2}+\left\lVert|V|^4V-\sum_{j=0}^{K}|R_j|^4R_j\right\rVert_{H^2} \leq e^{-\lambda t}, 
\end{equation}
for $\lambda=\frac{1}{16}v_{*}$. Thus, we rewrite \eqref{eq2-2new} as follows
\begin{equation}
\label{eq2-4new}
iV_t+V_{xx}+iV^2\overline{V_x}+b|V|^4V = e^{-\lambda t}m(t,x),
\end{equation}
where $m(t) \in H^2(\R)$ such that $\norm{m(t)}_{H^2}$ uniformly bounded in $t$.
%%here need to repair...
Define
\begin{align*}
h&=V,\\
k&=h_x+\frac{i}{2}|h|^2h.
\end{align*}
By an elementary calculation, $h,k$ satisfy the following system.
\begin{align*}
Lh&=-ih^2\overline{k}+\left(\frac{1}{2}- b\right)|h|^4h+e^{-t\lambda}m=P(h,k)+e^{-t\lambda}m,\\
Lk&=ik^2\overline{h}+\left(\frac{3}{2}-3b\right)|h|^4k+(1- 2b)|h|^2h^2\overline{k}+e^{-t\lambda}n=Q(h,k)+e^{-t\lambda}n.
\end{align*}
where $n=m_x+i|h|^2m-\frac{i}{2}h^2\overline{m}$ satisfies $\norm{n(t)}_{H^1}$ uniformly bounded in $t$. Let $\tilde u=u-h$ and $\tilde v=v-k$. Then $(\tilde u$, $\tilde v)$ solves:
\begin{equation}
\label{eq2-7new}
\begin{cases}
L\tilde u=P(u,v)-P(h,k)-e^{-t\lambda}m,\\
L\tilde v=Q(u,v)-Q(h,k)-e^{-t\lambda}n.
\end{cases}
\end{equation} 
Define $\eta=(\tilde u,\tilde v)$, $W=(h,k)$, $H=e^{-t\lambda}(m,n)$ and $f(u,v)=(P(u,v),Q(u,v))$. We find a solution of \eqref{eq2-7new} in the Duhamel form
\begin{equation}
\label{eq2-8new}
\eta=-i\int_t^{\infty}S(t-s)[f(W+\eta)-f(W)+H](s)\, ds.
\end{equation}
Moreover, from $v=u_x+\frac{i}{2}|u|^2u$, we have  
\begin{equation}
\label{eq2-9new}
\tilde v=\tilde u_x+\frac{i}{2}(|\tilde u+h|^2(\tilde u+h)-|h|^2h).
\end{equation}

\textbf{Step 2. Existence a solution of \eqref{eq2-8new}}

From Lemma \ref{lemma2}, there exists $T_{*} \gg 1$ such that for $T_0 \gg T_{*}$ there exists a unique solution $\eta$ defined on $[T_0,\infty)$ of \eqref{eq2-8new} such that 
\begin{equation}
\label{eq2-10new}
e^{t\lambda}\norm{\eta}_{S([t,\infty)) \times S([t,\infty))} +e^{t\lambda}\norm{\eta_x}_{S([t,\infty)) \times S([t,\infty))} \leq 1, \quad \forall t \geq T_0, 
\end{equation}
where $\lambda=\frac{v_*}{16}$. Thus, for all $t \geq T_0$, we have
\begin{equation}
\label{eq2-11new}
\norm{\tilde u}_{H^1}+\norm{\tilde v}_{H^1} \lesssim e^{-t\lambda}.
\end{equation}

\textbf{Step 3. Existence of multi kink-solitons}

By using similar arguments as in the proof of Theorem \ref{mainresult} we can prove that the solution $\eta=(\tilde\varphi,\tilde\psi)$ of \eqref{eq2-8new} satisfies the relation \eqref{eq2-9new} provided assumption \eqref{condition tecnique of multi kink soliton} is verified. This implies that 
\[
\tilde v=\tilde u_x+\frac{i}{2}(|\tilde u+h|^2(\tilde u+h)-|h|^2h). 
\]
Set $u=\tilde u+h$, $v=\tilde v+k$. We have
\begin{equation}\label{relation of solution in multi kink soliton case}
v=u_x+\frac{i}{2}|u|^2u.
\end{equation}
Since $(\tilde u,\tilde v)$ solves \eqref{eq2-7new}, we infer that $u,v$ solve
\begin{align*}
Lu&=P(u,v),\\
Lv &=Q(u,v).
\end{align*}
Combining with \eqref{relation of solution in multi kink soliton case}, we have $u$ solves \eqref{dNLS}. Moreover, for $t \geq T_0$, we have 
\begin{align*}
\norm{u-V}_{H^1} &= \norm{\tilde u}_{H^1}\lesssim e^{-\lambda t}.
\end{align*}
This completes the proof of Theorem \ref{mainresult2}. 

\begin{remark}\label{rm10}
We do not have the proof for the construction of multi kink-solitons for \eqref{dNLS}. The reason is that if the profile $R$ in the proof of Theorem \ref{mainresult} is not in $H^1(\R)$ then the function $h$ defined as in \eqref{eqof h} is not in $H^1(\R)$. Thus, the functions $m,n$ defined as in \eqref{expressionofmn1} and \eqref{expressionofmn2} are not in $H^1(\R)$ and we can not apply Lemma \ref{lemma2} to construct a solution of system \eqref{systemneedtosolve}.    
\end{remark}

\section{Some technical lemmas}
\label{chapter technique}

\subsection{Properties of solitons}
In this section, we prove some estimates on the multi-soliton profile used in the proof of Theorem \ref{mainresult}.

\begin{lemma}
\label{lemma1}
There exist $T_0>0$ and a constant $\lambda>0$ such that the estimate \eqref{eqestimate123} is uniformly true for $t \geq T_0$. 
\end{lemma}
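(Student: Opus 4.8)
The plan is to prove the bound \eqref{eqestimate123}, i.e. that the difference between the nonlinear terms of the profile $R$ and the sum of the individual nonlinear terms of the $R_j$ decays exponentially in $H^2$. The key structural fact is that $R = \sum_{j=1}^K R_j$ where each $R_j(t,x) = e^{i\theta_j}e^{i\omega_j t}\phi_{\omega_j,c_j}(x - c_j t)$ travels with a distinct velocity $c_j \neq c_k$, and each $\phi_{\omega_j,c_j}$ together with all its derivatives decays exponentially in space (this follows from the explicit formula \eqref{soliton profile}, since $\cosh(h_j x)$ grows exponentially). Therefore at time $t$ the bulk of $R_j$ is concentrated near $x = c_j t$, and two different solitons are separated by a distance $\gtrsim |c_j - c_k| t$, which is what ultimately produces the exponential decay with rate governed by $v_* = \inf_{j\neq k} h_j |c_j - c_k|$.

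First I would expand both differences algebraically. Each of $|R|^2 R_x - \sum_j |R_j|^2 R_{jx}$ and $|R|^4 R - \sum_j |R_j|^4 R_j$ is, after substituting $R = \sum R_j$ and multiplying out, a finite sum of monomials of the form (a product of some $R_j$'s, their conjugates, and at most one derivative factor $R_{j'x}$) in which at least two \emph{distinct} indices appear; the ``diagonal'' monomials involving a single index $j$ cancel exactly against the subtracted sum. So it suffices to bound, in $H^2$, a generic cross term, e.g. $R_{j_1}\overline{R_{j_2}}\partial R_{j_3}$ (resp.\ a quintic analogue) with not all indices equal. Since $H^2(\R)$ is an algebra, the $H^2$ norm of such a product is controlled by a product of $H^2$ norms of the factors — but that alone is not small; the point is to exploit the spatial separation.

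The main step, then, is a ``two bumps far apart'' estimate: if $j \neq k$ and $f,g$ are Schwartz-type functions with $|f^{(\ell)}(x)|, |g^{(\ell)}(x)| \lesssim_p e^{-\delta|x|}$ for $\ell \leq 2$ (with $\delta$ a fixed positive constant depending only on the parameters, e.g.\ $\delta = \tfrac12\min_j h_j$), then $\| f(\cdot - c_j t)\, g(\cdot - c_k t)\|_{H^2} \lesssim_p e^{-\frac{\delta}{2}|c_j - c_k| t}$ for $t$ large. This is proved by splitting $\R$ at the midpoint $m = \tfrac12(c_j + c_k)t$: on $\{x \le m\}$ one has $|x - c_k t| \gtrsim |c_j - c_k| t$ so $g$ and its derivatives are exponentially small there, while $f$ and its derivatives stay bounded; on $\{x \ge m\}$ the roles reverse. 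Carrying the same splitting through the (finitely many) terms of $\partial^2(fg)$ gives the claimed $H^2$ bound. Applying this with a sacrificial exponential factor to absorb the bounded remaining factors in each cross monomial, and summing over the finitely many monomials and index choices, yields \eqref{eqestimate123} with $\lambda = \tfrac{1}{16}v_*$ once $t \ge T_0$ is large enough (the crude constant $\tfrac{1}{16}$ leaving ample room to absorb the polynomial-in-$t$ losses coming from differentiating and from the algebra norm, and to pass from $\min_j h_j |c_j-c_k|$ down to $v_*/16$).

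The step I expect to be the genuine obstacle is bookkeeping rather than analysis: organizing the expansion of the quintic term $|R|^4 R - \sum_j |R_j|^4 R_j$ so that one sees cleanly that every surviving monomial contains at least two distinct indices, and keeping track of which factor (if any) carries the derivative so the ``two bumps'' lemma applies uniformly. The analytic content — exponential localization of each $\phi_{\omega_j,c_j}$ and its derivatives, plus the midpoint-splitting estimate — is routine given the explicit soliton formulas, and the fact that $K$ is finite means no summability issues arise. One subtlety worth a remark: the lemma as stated asserts a clean inequality $\le e^{-\lambda t}$ (no constant), which is legitimate only for $t \ge T_0$ with $T_0$ chosen large; I would state and use it in that form, which is exactly how it is invoked in \eqref{eqestimate123}.
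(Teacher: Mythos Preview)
Your approach is correct and captures the essential mechanism (exponential localisation plus velocity separation), but it is organised differently from the paper. The paper does \emph{not} expand the quintic into cross monomials. Instead, for each $x$ it picks the nearest soliton index $m=m(x)$ (minimising $|x-c_j t|$), observes that $|x-c_j t|\ge \tfrac{t}{2}|c_j-c_m|$ for every $j\neq m$, and then writes
\[
\chi \;=\;\bigl[F(R,\partial R,\dots)-F(R_m,\partial R_m,\dots)\bigr]\;-\;\sum_{j\neq m}F(R_j,\partial R_j,\dots),
\]
bounding both brackets pointwise via a single Lipschitz constant for the polynomial $F$ together with $|R-R_m|+|\partial(R-R_m)|+\dots\lesssim \sum_{j\neq m}e^{-\frac{h_j}{2}|x-c_j t|}\lesssim e^{-\frac14 v_* t}$. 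This gives a $W^{2,\infty}$ bound directly; the $H^2$ bound then follows by interpolating against a crude uniform $W^{2,1}$ bound. Your direct expansion plus the ``two bumps'' midpoint split is equally valid and arguably more elementary, at the cost of the bookkeeping you already flagged; the paper's nearest-soliton device trades that bookkeeping for a one-line Lipschitz estimate and scales painlessly to other polynomial nonlinearities.

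One genuine caveat in your write-up: the illustrative uniform rate $\delta=\tfrac12\min_j h_j$ does \emph{not} in general give a decay rate $\ge v_*/16$. With that choice your two-bumps lemma yields a rate $\sim(\min_\ell h_\ell)\cdot\min_{j\neq k}|c_j-c_k|$, and this product can be arbitrarily small compared with $v_*=\inf_{j\neq k}h_j|c_j-c_k|$ (take, e.g., one soliton with tiny $h$ but huge velocity gap, and two others with large $h$ but tiny mutual gap). The fix is exactly what the paper does implicitly: keep the soliton-specific rate $h_j/2$ for each factor, so that on the half-line where $R_k$ is far one extracts $e^{-\frac{h_k}{4}|c_j-c_k|t}$ and on the other half $e^{-\frac{h_j}{4}|c_j-c_k|t}$; since $\min(h_j,h_k)\,|c_j-c_k|\ge v_*$ by definition, this recovers the stated $\lambda=v_*/16$ after interpolation and absorbing constants into $T_0$.
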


\begin{proof}
First, we need some estimates on the soliton profile. We have 
\begin{align*}
|R_j(x,t)| &=|\Phi_{\omega_j,c_j}(x-c_j t)|=\sqrt{2}h_j\left(\sqrt{c_j^2+\gamma h_j^2}\cosh(h_j (x-c_j t))-c_j\right)^{-\frac{1}{2}} \\
&\quad\lesssim_{h_j,|c_j|} e^{\frac{-h_j}{2}|x-c_j t|}.  
\end{align*} 
Moreover,
\begin{align*}
|\partial R_j(x,t)| &= |\partial \phi_{\omega_j,c_j}(x-c_j t)| \\
&=\frac{-\sqrt{2}}{2} h_j^2 \sqrt{c_j^2+\gamma h_j^2}\left|\sinh(h_j (x-c_j t)\right|\left(\sqrt{c_j^2+\gamma h_j^2}\cosh(h_j (x-c_j t))-c_j\right)^{-\frac{3}{2}} \\
&\lesssim_{h_j,|c_j|} e^{\frac{-h_j}{2}|x-c_j t|}.
\end{align*}
By an elementary calculation, we have
\begin{align*}
|\partial^2 R_j(x,t)|+|\partial^3 R_j(x,t)| &\lesssim_{h_j,|c_j|} e^{\frac{-h_j}{2}|x-c_j t|}.
\end{align*}
For convenience, we set
\begin{align}
\chi_1&= i|R|^2R_x-i\sum_{j=1}^{K}|R_j|^2R_{jx},\label{define of chi 1}\\
\chi_2&=|R|^4R-\sum_{j=1}^{K}|R_j|^4R_j.\label{define of chi 2}
\end{align}
Fix $t>0$. For $x \in \R$, choose $m=m(x) \in \{1,2,...,K\}$ so that 
\[
|x-c_m t|=\mathop{\min}\limits_{j }|x-c_j t|.
\]
For $j \neq m$, we have
\[
|x-c_j t|\geq \frac{1}{2}|c_j t-c_m t|=\frac{t}{2}|c_j-c_m|.
\]
Thus, we have
\begin{align*}
&|(R-R_m)(x,t)|+|(\partial R-\partial R_{m}(x,t))|+|\partial^2 R-\partial^2R_m|+|\partial^3 R-\partial^3 R_m| \\
&\leq \sum_{j\neq m}(|R_j(x,t)|+|\partial R_{j}(x,t)|+|\partial^2 R_j(x,t)|+|\partial^3 R_j(x,t)|)\\
&\lesssim_{h_1,..,h_K,|c_1|,..,|c_K|}  \delta_m(x,t):= \sum_{j\neq m}  e^{\frac{-h_j}{2}|x-c_jt|}
\end{align*}
Recall that
\[
v_{*}=\mathop{\inf}\limits_{j \neq k}h_j|c_j-c_k|.
\]
We have
\[
|(R-R_m)(x,t)|+|(\partial R-\partial R_{m}(x,t))|+|\partial^2 R-\partial^2R_m|+|\partial^3 R-\partial^3 R_m| \lesssim  \delta_m(x,t) \lesssim e^{\frac{-1}{4}v_{*}t}.
\]
Let $f_1,g_1,r_1$ and $f_2,g_2,r_2$ be the polynomials of $u,u_x,u_{xx},u_{xxx}$ and conjugates satisfying: 
\begin{align*}
i|u|^2u_x=f_1(u,\overline{u},u_x), & \quad |u|^4u=f_2(u,\overline{u}),\\
\partial(i|u|^2u_x)=g_1(u,u_x,u_{xx},\overline{u},..),& \quad \partial(|u|^4u)=g_2(u,u_x,\overline{u},..),\\
\partial^2(i|u|^2u_x)=r_1(u,u_x,u_{xx},u_{xxx},\overline{u},..),&\quad \partial^2(|u|^4u)=r_2(u,u_x,u_{xx},\overline{u},..).
\end{align*} 
Denote 
\begin{align*}
A=\mathop{\sup}\limits_{|u|+|u_x|+|u_{xx}|+|u_{xxx}|\leq \mathop{\sum}\limits_{j=1}^{K}\norm{R_j}_{H^4}}&(|df_1|+|df_2|+|dg_1|+|dg_2|+|dr_1|+|dr_2|),
\end{align*}
We have
\begin{align*}
&|\chi_1|+|\chi_2|+|\partial \chi_1|+|\partial \chi_2| +|\partial^2 \chi_1|+|\partial^2 \chi_2|\\
&\leq |f_1(R,R_{x})-f_1(R_m,R_{mx})|+|f_2(R)-f_2(R_m)|+\sum_{j\neq m}(|f_1(R_j,R_{jx})|+|f_2(R_j)|)\\
&\quad +|g_1(R,R_x,R_{xx},..)-g_1(R_m,R_{mx},R_{mxx},..)|+|g_2(R,R_x,..)-g_2(R_m,R_{mx},..)|\\
&+\sum_{j\neq m}(g_1(R_j,R_{jx},R_{jxx},..)+g_2(R_j,R_{jx}),..)\\
&\quad + |r_1(R,R_x,R_{xx},R_{xxx},..)-r_1(R_m,R_{mx},R_{mxx},R_{mxxx},..)|\\
&\quad +|r_2(R,R_x,R_{xx},..)-r_2(R_m,R_{mx},R_{mxx},..)|\\
&\quad +\sum_{j\neq m}(r_1(R_j,R_{jx},R_{jxx},R_{jxxx},..)+r_2(R_j,R_{jx},R_{jxx},..))\\
&\leq A(|R-R_m|+|R_x-R_{mx}|+|R_{xx}-R_{mxx}|+|R_{xxx}-R_{mxxx}|)\\
&\quad+\sum_{j\neq m}A(|R_j|+|R_{jx}|+|R_{jxx}|+|R_{jxxx}|)\\
& \leq 2A\sum_{j\neq m}(|R_j|+|R_{jx}|+|R_{jxx}|+|R_{jxxx}|)\\
&\lesssim_p \delta_m(t,x).
\end{align*}
In particular,
\[
\norm{\chi_1}_{W^{2,\infty}}+\norm{\chi_2}_{W^{2,\infty}}\lesssim_p e^{-\frac{1}{4}v_{*}t}. 
\]
Moreover, we have
\begin{align*}
&\norm{\chi_1}_{W^{2,1}}+\norm{\chi_2}_{W^{2,1}} \\
&\lesssim \sum_{j=1}^K (\norm{|R_j|^2R_{jx}}_{L^1}+\norm{\partial (|R_j|^2R_{jx})}_{L^1}+\norm{\partial^2(|R_j|^2R_{jx})}_{L^1}\\
&\quad+\norm{R_j^5}_{L^1}+\norm{\partial(|R_j|^4R_j)}_{L^1}+\norm{\partial^2(|R_j|^4R_j)}_{L^1})\\
&\lesssim  \sum_{j=1}^K(\norm{R_j}^3_{H^1}+\norm{R_j}^3_{H^2}+\norm{R_j}^3_{H^3}+\norm{R_j}^5_{H^1}+\norm{R_j}^5_{H^1}+\norm{R_j}^5_{H^2})<C<\infty
\end{align*}
By Holder inequality, for $1<r<\infty$, we have
\[
\norm{\chi_1}_{W^{2,r}}+\norm{\chi_2}_{W^{2,r}}\lesssim_p e^{-(1-\frac{1}{r})\frac{1}{4} v_{*}t}, \quad \forall r \in (1,\infty).
\]
Choosing $r=2$ we obtain:
\[
\norm{\chi_1}_{H^2}+\norm{\chi_2}_{H^2} \lesssim_p e^{-\frac{v_{*}}{8}t},
\]
Thus, for $t \geq T_0$, where $T_0$ large enough depend on the parameters $\omega_1,...,\omega_K,c_1,...,c_K$, we have
\[
\norm{\chi_1}_{H^2}+\norm{\chi_2}_{H^2}\leq e^{-\frac{v_{*}}{16}t} ,\quad \forall t \geq T_0.
\] 
Let $\lambda=\frac{v_{*}}{16}$, we obtain the desired result. 
\end{proof}

\subsection{Prove the boundedness of $v,m,n$}
Let $v$, $m$ and $n$ be given as in \eqref{eqR}, \eqref{expressionofmn1} and \eqref{expressionofmn2} respectively. In this section, we prove the uniform in time boundedness in $H^2(\R)$ of $v$ and in $H^1(\R)$ of $m,n$. We have the following result.
\begin{lemma}
\label{lm4}
There exist $C>0$ and $T_0>0$ such that for all $t>T_0$ the functions $v,m,n$ satisfy
\[
\norm{v(t)}_{H^2}+\norm{m(t)}_{H^1}+\norm{n(t)}_{H^1} \leq C,
\]
\end{lemma}

\begin{proof}
Let $\chi_1$ and $\chi_2$ be defined as in \eqref{define of chi 1} and \eqref{define of chi 2} respectively. We have
\[
e^{-\lambda t} v =\chi_1 +b\chi_2.
\]
By Lemma \ref{lemma1}, we have $\norm{v(t)}_{H^2} \leq D$, for some constant $D>0$. From \eqref{expressionofmn1}, we have
\begin{align*}
\norm{m}_{H^2} \lesssim \norm{v}_{H^2}+\norm{h}_{H^2}\norm{v}_{H^2}\norm{R}_{H^2} \leq C_1,
\end{align*} 
for some constant $C_1>0$. From, \eqref{expressionofmn2}, we have
\begin{align*}
\norm{n}_{L^2} \lesssim \norm{m_x}_{L^2} +\norm{h}^2_{H^1}\norm{m}_{H^1} \leq \norm{m}_{H^1}(1+\norm{h}^2_{H^1}) \leq C_2,
\end{align*}
for some constant $C_2>0$. Moreover, we have
\begin{align*}
\norm{n_x}_{L^2} \lesssim \norm{m_{xx}}_{L^2}+\norm{h}^2_{H^1}\norm{m}_{H^1} \leq \norm{m}_{H^2}(1+\norm{h}^2_{H^1}) \leq C_3,
\end{align*}
for some constant $C_3>0$. Choosing $C=D+C_1+C_2+C_3$, we obtain the desired result.
\end{proof}

\subsection{Existence solution of system equation}

In this section, we prove the existence of solutions of \eqref{eqof eta}. For convenience, we recall the equation:
\begin{equation}
\label{eqgeneralneedtosolve}
\eta(t)=i\int_t^{\infty}S(t-s)[f(W+\eta)-f(W)+H](s)\, ds,
\end{equation}
where $\eta=(\tilde{u},\tilde{v})$ is unknown function, $W=(h,k)$, $H=-e^{-t\lambda}(m,n)$ and $f(u,v)=(P(u,v),Q(u,v))$, where $P,Q$ are defined by
\begin{align*}
P(u,v)&=-iu^2\overline{v}+\left(\frac{1}{2}-b\right)|u|^4u,\\
Q(u,v)&=iv^2\overline{u}+\left(\frac{3}{2}-3b\right)|u|^4v+(1-2b)|u|^2u^2\overline{v}.
\end{align*}
The existence of solutions of \eqref{eqgeneralneedtosolve} is established in the following lemma.
\begin{lemma}
\label{lemma2}
Let $H=H(t,x):[0,\infty)\times \R \rightarrow \C^2$, $W=W(t,x):[0,\infty)\times \R \rightarrow \C^2$ be given vector functions which satisfy for some $C_1>0$, $C_2>0$, $\lambda>0$, $T_0 \geq 0$:
\begin{align}
\norm{W(t)}_{L^{\infty}\times L^{\infty}}+e^{\lambda t}\norm{H(t)}_{L^2 \times L^2} &\leq C_1 \quad \forall t \geq T_0, \label{estimateWHinL2norma1}\\
\norm{\partial W(t)}_{L^2 \times L^2}+\norm{\partial W(t)}_{L^{\infty} \times L^{\infty}}+e^{\lambda t}\norm{\partial H(t)}_{L^2 \times L^2} &\leq C_2, \quad \forall t \geq T_0. \label{estimateWHinderivativenorma1}
\end{align} 
Consider equation \eqref{eqgeneralneedtosolve}. There exists a constant $\lambda_{*}$ such that if $\lambda \geq \lambda_{*}$ then there exists a unique solution $\eta$ to \eqref{eqgeneralneedtosolve} on $[T_0,\infty) \times \R$ satisfying
\[
e^{\lambda t}\norm{\eta}_{S([t,\infty)) \times S([t,\infty))}+e^{\lambda t}\norm{\partial \eta}_{S([t,\infty)) \times S([t,\infty))} \leq 1, \quad \forall t \geq T_0.
\]
\end{lemma}

\begin{proof}
We use similar arguments as in \cite{CoDoTs15,CoTs14}. We rewrite \eqref{eqgeneralneedtosolve} into $\eta=\Phi\eta$. We shall show that, for $\lambda$ sufficiently large, $\Phi$ is a contraction map in the ball
\[
B=\left\{\eta:\norm{\eta}_{X}:=e^{\lambda t}\norm{\eta}_{S([t,\infty)) \times S([t,\infty))}+e^{\lambda t}\norm{\partial\eta}_{S([t,\infty)) \times S([t,\infty))} \leq 1 \right\}.
\]

\textbf{Step 1. Proof that $\Phi$ maps $B$ into $B$}

Let $t \geq T_0$, $\eta=(\eta_1,\eta_2) \in B$, $W=(w_1,w_2)$ and $H=(h_1,h_2)$. By Strichartz estimates, we have
\begin{align}
\norm{\Phi\eta}_{S([t,\infty)) \times S([t,\infty))} &\lesssim \norm{f(W+\eta)-f(W)}_{N([t,\infty)) \times N([t,\infty))} \label{term1a}\\
&\quad+\norm{H}_{L_{\tau}^1L_x^2([t,\infty)) \times L_{\tau}^1L_x^2([t,\infty))}.\label{term2a}
\end{align} 
For \eqref{term2a}, using \eqref{estimateWHinL2norma1}, we have
\begin{align*}
\norm{H}_{L_{\tau}^1L_x^2([t,\infty)) \times L_{\tau}^1L_x^2([t,\infty))} &= \norm{h_1}_{L_{\tau}^1L_x^2([t,\infty))}+\norm{h_2}_{L_{\tau}^1L_x^2([t,\infty))}\\
&\quad\lesssim \int_t^{\infty}e^{-\lambda \tau} \, d\tau \leq \frac{1}{\lambda} e^{-\lambda t}. 
\end{align*}
For \eqref{term1a}, we have
\begin{align*}
&|P(W+\eta)-P(W)| = |P(w_1+\eta_1,w_2+\eta_2)-P(w_1,w_2)|\\
&\lesssim |(w_1+\eta_1)^2\overline{(w_2+\eta_2)}-w_1^2\overline{w_2}| + ||\eta_1+w_1|^4(\eta_1+w_1)-|w_1|^4w_1|\\
&\lesssim |\eta_1|+|\eta_2|+|\eta_1|^5  
\end{align*}
Thus,
\begin{align*}
\norm{P(W+\eta)-P(W)}_{N([t,\infty))} &\lesssim \norm{\eta_1}_{N([t,\infty))}+\norm{\eta_2}_{N([t,\infty))}+\norm{\eta_1^5}_{N([t,\infty))}\\
&\lesssim \norm{\eta_1}_{L^1_{\tau}L^2_x(t,\infty)}+\norm{\eta_2}_{L^1_{\tau}L^2_x(t,\infty)}+\norm{\eta_1^5}_{L^1_{\tau}L^2_x(t,\infty)} \\
&\lesssim \int_t^{\infty}e^{-\lambda \tau}\, d\tau + \int_t^{\infty} \norm{\eta_1(\tau)}^5_{L^{10}}\, d\tau \\
&\lesssim  \frac{1}{\lambda}e^{-\lambda t} + \int_t^{\infty}\norm{\eta_1(\tau)}_{L^2}^{\frac{7}{2}}\norm{\partial\eta_1(\tau)}_{L^2}^{\frac{3}{2}} \\
&\lesssim  \frac{1}{\lambda}e^{-\lambda t} + \int_t^{\infty}e^{-(7/2 \lambda + 3/2 \lambda)\tau}\, d\tau\\
&\lesssim \frac{1}{\lambda}e^{-\lambda t} + \frac{1}{7/2 \lambda +3/2 \lambda}e^{-(7/2 \lambda +3/2\lambda)t} \lesssim \frac{1}{\lambda}e^{-\lambda t}.
\end{align*}
By similar arguments as above, we have
\[
\norm{Q(W+\eta)-Q(W)}_{N([t,\infty))} \lesssim \frac{1}{\lambda} e^{-\lambda t}.
\]
Thus, for $\lambda$ large enough, we have
\[
\norm{\Phi\eta}_{S([t,\infty) \times S([t,\infty)))} \leq \frac{1}{10} e^{-\lambda t}.
\]
It remains to estimate $\norm{\partial\Phi\eta}_{S([t,\infty) \times S([t,\infty)))}$. By Strichartz estimate we have
\begin{align}
\norm{\partial\Phi\eta}_{S([t,\infty) \times S([t,\infty)))} &\lesssim \norm{\partial(f(W+\eta)-f(W))}_{N([t,\infty)) \times N([t,\infty))} \label{term3a}\\
&\quad+\norm{\partial H}_{N([t,\infty)) \times N([t,\infty))}.\label{term4a}
\end{align}
For \eqref{term4a}, using \eqref{estimateWHinderivativenorma1}, we have
\begin{align}
\norm{\partial H}_{N([t,\infty)) \times N([t,\infty))} &\leq \norm{\partial h_1}_{L^1_{\tau}L^2_x([t,\infty))}+\norm{\partial h_2}_{L^1_{\tau}L^2_x([t,\infty))} \nonumber\\
& \lesssim \int_t^{\infty} e^{-\lambda \tau}\, d\tau =\frac{1}{\lambda}e^{-\lambda t}. \label{superbigterm1}
\end{align}
For \eqref{term3a}, we have
\begin{align*}
&\norm{\partial (f(W+\eta)-f(W))}_{N([t,\infty)) \times N([t,\infty))} \\
& = \norm{\partial (P(W+\eta)-P(W))}_{N([t,\infty))}+\norm{\partial (Q(W+\eta)-Q(W))}_{N([t,\infty))}
\end{align*}
Furthermore,
\begin{align*}
&|\partial(P(W+\eta)-P(W))|\\
&\lesssim |\partial((w_1+\eta_1)^2\overline{(w_2+\eta_2)}-w_1^2\overline{w_2})|+|\partial (|w_1+\eta_1|^4(w_1+\eta_1)-|w_1|^4w_1)|\\
&\lesssim |\partial\eta|(|\eta|^2+|W|^2)+|\partial W|(|\eta|^2+|W||\eta|) \\
&\quad + |\partial\eta|(|\eta|^4+|W|^4)+|\partial W|(|\eta|^4+|\eta||W|^3).
\end{align*}
Thus, we have
\begin{align}
&\norm{\partial (P(W+\eta)-P(W))}_{N([t,\infty))}
\nonumber\\ &\lesssim \norm{|\partial\eta|(|\eta|^2+|W|^2)}_{N([t,\infty))}+\norm{|\partial W|(|\eta|^2+|W||\eta|)}_{N([t,\infty))}\label{term5}\\
&\quad +\norm{|\partial\eta|(|\eta|^4+|W|^4)}_{N([t,\infty))}+\norm{|\partial W|(|\eta|^4+|\eta||W|^3)}_{N([t,\infty))}.\label{term6}
\end{align}
For \eqref{term5}, using \eqref{estimateWHinL2norma1} and \eqref{estimateWHinderivativenorma1} and the assumption $\eta \in B$ we have
\begin{align*}
&\norm{|\partial\eta|(|\eta|^2+|W|^2)}_{N([t,\infty))}+\norm{|\partial W|(|\eta|^2+|W||\eta|)}_{N([t,\infty))}\\
&\lesssim \norm{|\partial\eta||\eta|^2}_{L^1_{\tau}L^2_x([t,\infty))}+\norm{|\partial \eta||W|^2}_{L^1_{\tau}L^2_x([t,\infty))}+\norm{|\partial W||\eta|^2}_{L^1_{\tau}L^2_x([t,\infty))}\\
&\quad+\norm{|\partial W||W||\eta|}_{L^1_{\tau}L^2_x([t,\infty))}\\
&\lesssim \norm{|\partial\eta|}_{L^2_{\tau}L^2_x([t,\infty))}\norm{|\eta|}^2_{L^4_{\tau}L^{\infty}}+\norm{|\partial\eta|}_{L^1_{\tau}L^2_x([t,\infty))}\norm{|W|}^2_{L^{\infty}L^{\infty}}\\
&\quad +\norm{|\partial W|}_{L^{\infty}L^{\infty}}\norm{|\eta|}_{L^4_{\tau}L^{\infty}_x([t,\infty))}\norm{|\eta|}_{L^{4/3}_{\tau}L^2_x([t,\infty))}\\
&\quad+\norm{|W|}_{L^{\infty}L^{\infty}}\norm{|\partial W|}_{L^{\infty}L^{\infty}}\norm{|\eta|}_{L^1_{\tau}L^2_x([t,\infty))}\\
&\lesssim \frac{1}{\lambda}e^{-\lambda t}.
\end{align*}

For \eqref{term6}, using \eqref{estimateWHinL2norma1} and \eqref{estimateWHinderivativenorma1} and the assumption $\eta \in B$ we have

\begin{align*}
&\norm{|\partial\eta|(|\eta|^4+|W|^4)}_{N([t,\infty))}+\norm{|\partial W|(|\eta|^4+|\eta||W|^3)}_{N([t,\infty))}\\
&\lesssim \norm{|\partial\eta|(|\eta|^4+|W|^4)}_{L^1_{\tau}L^2_x([t,\infty))}+\norm{|\partial W|(|\eta|^4+|\eta||W|^3)}_{L^1_{\tau}L^2_x([t,\infty))}\\
&\lesssim \norm{\partial\eta}_{L^{\infty}_{\tau}L^2_x([t,\infty))}\norm{\eta}^4_{L^4_{\tau}L^{\infty}_x([t,\infty))}+\norm{W}^4_{L^{\infty}L^{\infty}}\norm{\partial\eta}_{L^1_{\tau}L^2_x([t,\infty))}\\
&\quad  +\norm{\partial W}_{L^{\infty}L^2}\norm{\eta}^4_{L^4_{\tau}L^{\infty}_x([t,\infty))} + \norm{\partial W}_{L^{\infty}L^{\infty}}\norm{|W|}^3_{L^{\infty}L^{\infty}}\norm{\eta}_{L^1_{\tau}L^2_x([t,\infty))}\\
&\lesssim \frac{1}{\lambda}e^{-\lambda t}.
\end{align*}
Hence,
\begin{align}\label{bigterm1}
\norm{\partial(P(W+\eta)-P(W))}_{N([t,\infty))} \lesssim \frac{1}{\lambda}e^{-\lambda t}.
\end{align}
By similar arguments, we have
\begin{align}\label{bigterm2}
\norm{\partial(Q(W+\eta)-Q(W))}_{N([t,\infty))} \lesssim \frac{1}{\lambda}e^{-\lambda t}.
\end{align}
Combining \eqref{bigterm1} and \eqref{bigterm2}, we obtain
\begin{equation}
\label{superbigterm2}
\norm{\partial(f(W+\eta)-f(W))}_{N([t,\infty)) \times N([t,\infty))} \lesssim \frac{1}{\lambda}e^{-\lambda t}.
\end{equation}
Combining \eqref{superbigterm1} and \eqref{superbigterm2}, we obtain
\begin{equation*}
\norm{\partial\Phi\eta}_{S([t,\infty)) \times S([t,\infty))}\lesssim \frac{1}{\lambda}e^{-\lambda t} \leq \frac{1}{10}e^{-\lambda t},
\end{equation*}
if $\lambda>0$ is large enough. Thus, for $\lambda>0$ large enough
\begin{equation}
\label{finalestimate}
\norm{\Phi\eta}_{X} \leq 1.
\end{equation} 
This implies that $\Phi$ map $B$ onto $B$.

\textbf{Step 2. $\Phi$ is contraction map on $B$}

By using \eqref{estimateWHinL2norma1} and \eqref{estimateWHinderivativenorma1} and similar estimates as for the proof of \eqref{finalestimate}, we can show that, for any $\eta \in B$, $\kappa \in B$,
\[
\norm{\Phi\eta - \Phi\kappa}_{X} \leq \frac{1}{2}\norm{\eta-\kappa}_{X}.
\] 
By Banach fixed point theorem there exists a unique solution on $B$ of \eqref{eqgeneralneedtosolve}.
\end{proof}

\subsection{Properties of multi kink-solitons profile}
In this section, we prove some estimates on the multi kink-solitons profile used in the proof of Theorem \ref{mainresult2}. 
\begin{lemma}
\label{lm100}
There exist $T_0>0$ and a constant $\lambda>0$ such that the estimate \eqref{eq2-3new} is uniformly true for $t \geq T_0$. 
\end{lemma}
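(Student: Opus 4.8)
To prove Lemma \ref{lm100}, the plan is to follow the scheme of Lemma \ref{lemma1}, the only genuinely new difficulty being that the half-kink $R_0$ does not vanish at $-\infty$. First I would write $V=R_0+S$ with $S:=\sum_{j=1}^{K}R_j$ and note that each nonlinearity $\mathcal N(w)=w^2\overline{w_x}$ and $\mathcal N(w)=|w|^4w$ is a homogeneous polynomial in $w,\overline w$ (and, for the first, in $w_x,\overline{w_x}$). Substituting $w=R_0+S$ and sorting the resulting monomials,
\[
\mathcal N(V)-\sum_{j=0}^{K}\mathcal N(R_j)=\Big(\mathcal N(V)-\mathcal N(R_0)-\mathcal N(S)\Big)+\Big(\mathcal N(S)-\sum_{j=1}^{K}\mathcal N(R_j)\Big),
\]
where the first bracket is a finite sum of \emph{mixed} monomials, each containing at least one factor from $\{R_0,\overline{R_0},R_{0x},\overline{R_{0x}}\}$ and at least one from $\{R_j,\overline{R_j},R_{jx},\overline{R_{jx}}:1\le j\le K\}$, and the second bracket is the pure soliton interaction. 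The two brackets are then estimated in $H^2$ separately.

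For the pure soliton interaction, observe that $S=\sum_{j=1}^{K}R_j$ is built only from solitons of \eqref{dNLS 2}, which carry exactly the exponential profiles and the polynomial nonlinear structure used in Lemma \ref{lemma1}; hence the argument there applies verbatim (for each $x$ pick $m$ minimizing $|x-c_jt|$; use $|x-c_jt|\ge\frac12|c_j-c_m|t$ for $j\ne m$; bound the polynomial differences by a first-order Taylor estimate; combine the resulting $W^{2,\infty}$ decay with the uniform $W^{2,1}$ bound; interpolate). This gives $\norm{\mathcal N(S)-\sum_{j=1}^{K}\mathcal N(R_j)}_{H^2}\lesssim_p e^{-\frac{v_*}{8}t}$ for both $\mathcal N(w)=w^2\overline{w_x}$ and $\mathcal N(w)=|w|^4w$.

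The mixed monomials are the heart of the matter. From the gauge transform defining $\Phi_{\omega_0,c_0}$ (the one preceding \eqref{eqof locolized solution}, of modulus one) together with estimate \eqref{estimate of Phi 0} of Remark \ref{remark constant a}, one gets, for $\ell=0,1,2,3$,
\[
|\partial^\ell R_0(t,x)|\lesssim_p 1_{x<c_0t}+1_{x\ge c_0t}\,e^{-\frac12(x-c_0t)},
\]
so $R_0$ and its first three derivatives are globally bounded and decay like $e^{-\frac12(x-c_0t)}$ on $\{x\ge c_0t\}$, but need \emph{not} decay on $\{x<c_0t\}$. Each soliton factor satisfies $|\partial^\ell R_j(t,x)|\lesssim_p e^{-\frac{h_j}{2}|x-c_jt|}$, so a mixed monomial is pointwise $\lesssim_p\sum_{j=1}^{K}|\partial^{\ell_0}R_0(t,x)|\,e^{-\frac{h_j}{2}|x-c_jt|}$ (times bounded factors), with $\ell_0\in\{0,1\}$. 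Here I would use crucially the hypothesis $c_j>c_0$: since $(x-c_0t)+(c_jt-x)=(c_j-c_0)t>0$, for every $x$ either $x-c_0t\ge\frac12(c_j-c_0)t$ — so $|\partial^{\ell_0}R_0(t,x)|\lesssim_p e^{-\frac14(c_j-c_0)t}$ by the decay on $\{x\ge c_0t\}$ — or $|x-c_jt|\ge\frac12(c_j-c_0)t$ — so $e^{-\frac{h_j}{2}|x-c_jt|}\le e^{-\frac{h_j}{4}(c_j-c_0)t}\,e^{-\frac{h_j}{4}|x-c_jt|}$. In both cases the monomial is bounded by $e^{-\kappa t}$ times a function of $x$ which is bounded and decays like $e^{-\frac{h_j}{4}|x-c_jt|}$, where $\kappa:=\frac14\min_{1\le j\le K}\big(\min(1,h_j)\,(c_j-c_0)\big)>0$; hence it has $L^2_x$-norm $\lesssim_p e^{-\kappa t}$. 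Applying $\partial$ or $\partial^2$ either differentiates an $R_0$-type factor (turning $R_0$ into $R_{0x}$, or $R_{0x}$ into $R_{0xx}$, all still bounded and decaying on $\{x\ge c_0t\}$) or a soliton factor (preserving the $e^{-\frac{h_j}{2}|x-c_jt|}$ decay), so the same estimate survives; summing over monomials gives $\norm{\mathcal N(V)-\mathcal N(R_0)-\mathcal N(S)}_{H^2}\lesssim_p e^{-\kappa t}$.

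Combining the two brackets,
\[
\norm{V^2\overline{V_x}-\sum_{j=0}^{K}R_j^2\overline{R_{jx}}}_{H^2}+\norm{|V|^4V-\sum_{j=0}^{K}|R_j|^4R_j}_{H^2}\lesssim_p e^{-\beta t},\qquad \beta:=\min\!\big(\tfrac{v_*}{8},\kappa\big)>0,
\]
and, exactly as in Lemma \ref{lemma1} (using $h_j|c_j-c_0|\ge v_*$ to compare $\kappa$ with $v_*$), there is $T_0>0$, depending only on $\omega_0,\dots,\omega_K,c_0,\dots,c_K$, such that the right-hand side is $\le e^{-\lambda t}$ for all $t\ge T_0$ with $\lambda=\frac1{16}v_*$ — which is \eqref{eq2-3new}. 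I expect the main obstacle to be precisely the non-decay of $R_0$ at $-\infty$: it is what prevents $V$, $V_x$, $|V|^4V$ and $V^2\overline{V_x}$ from lying in $L^2$ individually, and it is overcome by two facts — first, the non-decaying pieces $R_0^2\overline{R_{0x}}$ and $|R_0|^4R_0$ are subtracted off exactly in \eqref{eq2-3new}, so only mixed monomials remain and each of these carries a Schwartz soliton factor; second, the ordering $c_0<c_1,\dots,c_K$ places every soliton inside $\{x>c_0t\}$, where $R_0$ does decay, which is what upgrades the mere boundedness of $R_0$ to genuine time-decay of the cross terms.
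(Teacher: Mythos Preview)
Your proof is correct and rests on the same key observation as the paper's: the ordering $c_j>c_0$ forces every soliton to sit in the region $\{x>c_0t\}$ where the half-kink $R_0$ enjoys exponential decay, so cross terms between $R_0$ and the $R_j$'s inherit time-decay. The organization, however, differs. You perform an \emph{algebraic} decomposition, writing $\mathcal N(V)-\sum_j\mathcal N(R_j)$ as a sum of mixed $R_0$/soliton monomials plus a pure soliton interaction, and treat the two pieces separately (the pure part by Lemma~\ref{lemma1}, the mixed part by your dichotomy on $(x-c_0t)$ versus $(c_jt-x)$). The paper instead extends the \emph{geometric} scheme of Lemma~\ref{lemma1} directly: for each $x$ it picks $m\in\{0,\dots,K\}$ minimizing $|x-c_mt|$ and splits into the cases $m\geq1$ (where $x>c_0t$, so $R_0$ behaves like a soliton and the whole Lemma~\ref{lemma1} machinery runs with $K+1$ centers) and $m=0$ (where all the true solitons are far and carry the decay). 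Both routes arrive at the same $W^{2,\infty}$ decay and $W^{2,1}$ boundedness, then interpolate; your mixed-monomial viewpoint is arguably cleaner for bookkeeping, while the paper's case split makes the parallel with Lemma~\ref{lemma1} more explicit and avoids introducing the auxiliary rate $\kappa$.
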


\begin{proof}
For convenience, set 
\[
R=\sum_{j=1}^K R_j.
\]
By similar arguments in the proof of Lemma \ref{lemma1}, we have
\begin{align*}
|R_j(x,t)|+|\partial R_j(x,t)|+|\partial^2 R_j(x,t)|+|\partial^3 R_j(x,t)| &\lesssim_{h_j,|c_j|} e^{\frac{-h_j}{2}|x-c_j t|},
\end{align*}
for all $1 \leq j \leq K$. Define
\begin{align*}
\chi_1&= iV^2\overline{V_x}-i\sum_{j=0}^{K}R_j^2\overline{R_{jx}},\\
\chi_2&=|V|^4V-\sum_{j=0}^{K}|R_j|^4R_j.
\end{align*}
Fix $t>0$. For $x \in \R$, we choose $m=m(x) \in \N$ such that
\[
|x-c_m t| =\mathop{\min}\limits_{j\in\N}|x-c_j t|.
\]
If $m \geq 1$ then by the assumption $c_0<c_j$ for $j>0$ we have $x > c_0 t$. Thus, by the asymptotic behaviour of $\Phi_0$ as in Remark \ref{remark constant a}, we can see $R_0$ as a soliton. More precise, we have
\[
|R_0(t,x)|+|R_0'(t,x)|+|R_0''(t,x)|+|R_0'''(t,x)| \lesssim  e^{-\frac{1}{2}|x-c_0 t|} \lesssim e^{-\frac{1}{4}v_{*}t}.
\]   
Using similar argument as in the proof of Lemma \ref{lemma1}, we have:
\[
|(R-R_m)(x,t)|+|(\partial R-\partial R_m)(x,t)|+|(\partial^2 R-\partial^2 R_m)(x,t)|+|\partial^3 R-\partial^3 R_m| \lesssim e^{-\frac{1}{4}v_{*}t}.
\]
Let $f_1,g_1,r_1$ and $f_2,g_2,r_2$ be the polynomials of $u,u_x,u_{xx},u_{xxx}$ and their conjugates such that for all $u \in H^3(\R)$: 
\begin{align*}
iu^2\overline{u_x}=f_1(u,\overline{u},u_x), & \quad |u|^4u=f_2(u,\overline{u}),\\
\partial(iu^2\overline{u_x})=g_1(u,u_x,u_{xx},\overline{u},..),& \quad \partial(|u|^4u)=g_2(u,u_x,\overline{u},..),\\
\partial^2(iu^2\overline{u_x})=r_1(u,u_x,u_{xx},u_{xxx},\overline{u},..),&\quad \partial^2(|u|^4u)=r_2(u,u_x,u_{xx},\overline{u},..).
\end{align*} 
Denote 
\[
A=\mathop{\sup}\limits_{|u|+|u_x|+|u_{xx}|+|u_{xxx}|\leq \norm{R_0}_{W^{4,\infty}}+\sum_{j=1}^K \norm{R_j}_{H^4(\R)}}(|df_1|+|df_2|+|dg_1|+|dg_2|+|dr_1|+|dr_2|).
\]
In the case $m=1$, we have
\begin{align*}
&|\chi_1|+|\chi_2|+|\partial \chi_1|+|\partial \chi_2| +|\partial^2 \chi_1|+|\partial^2 \chi_2|\\
&\lesssim |R_0|^2|R_{0x}|+|R_0|^5+|f_1(V,..)-f_1(R,..)|+|f_2(V,...)-f_2(R,..)|\\
&\quad+|g_1(V,..)-g_1(R,..)|+|g_2(V,..)-g_2(R,..)|\\
&\quad+|r_1(V,..)-r_1(R,..)|+|r_2(V,..)-r_2(R,..)|+|f_1(R,R_x,\overline{R})-\sum_{j=1}^K f_1(R_j,R_{jx},\overline{R_j})|\\
&\quad+|f_2(R,\overline{R})-\sum_{j=0}^Kf_2(R_j,\overline{R_j})| +|g_1(R,R_x,..)-\sum_{j=1}^K g_1(R_j,R_{jx},..)|\\
&\quad+|g_2(R,R_x,..)-\sum_{j=0}^Kg_2(R_j,R_{jx},..)|+|r_1(R,R_x,..)-\sum_{j=0}^Kr_1(R_j,R_{jx},..)|\\
&\quad+|r_2(R,R_x,..)-\sum_{j=0}^Kr_2(R_j,R_{jx},..)|\\
&\lesssim |R_0|^2|R_{0x}|+|R_0|^5+A |R_0|\\
&\quad +A(|(R-R_m)(x,t)|+|(\partial R-\partial R_m)(x,t)|+|(\partial^2 R-\partial^2 R_m)(x,t)|+|\partial^3 R-\partial^3 R_m|)\\
&\quad +A\sum_{j=1,j\neq m}^K(|R_j|+|\partial R_j|+|\partial^2R_j|+|\partial^3R_j|)\\
& \lesssim |R_0|^2|R_{0x}|+|R_0|^5+A |R_0|+A\sum_{j=1,j\neq m}^K(|R_j|+|\partial R_j|+|\partial^2R_j|+|\partial^3R_j|)\\
&\lesssim_p e^{-\frac{1}{4}v_{*}t}, 
\end{align*}
In the case $m=0$, we have 
\begin{align*}
&|\chi_1|+|\chi_2|+|\partial \chi_1|+|\partial \chi_2| +|\partial^2 \chi_1|+|\partial^2 \chi_2|\\
&\lesssim \sum_{v=1,2} (|f_v(V,V_x,..)-f_v(R_0,\partial R_0,..)|+|g_v(V,V_x,..)-g_v(R_0,\partial R_0,..)|\\
&\quad+|r_v(V,V_x,..)-r_v(R_0,\partial R_0)|)\\
&\quad + \sum_{j=1,...,K;v=1,2} (|f_v(R_j,R_{jx},..)|+|g_v(R_j,R_{jx},..)|+|r_v(R_j,R_{jx},..)|)\\
&\lesssim A|R|+A\sum_{j=1}^K (|R_j|+|\partial R_j|+|\partial^2 R_j|+|\partial^3 R_j|)\\
& \lesssim_p e^{-\frac{1}{4}v_{*}t}.
\end{align*}
In all case we have  
\begin{equation}\label{qq}
\norm{\chi_1(t)}_{W^{2,\infty}}+\norm{\chi_2(t)}_{W^{2,\infty}}\lesssim_p e^{-\frac{1}{4}v_{*}t}.
\end{equation}
On one hand,
\begin{align*}
&\norm{\chi_1(t)}_{W^{2,1}} \\
&\lesssim \sum_{j=0}^K (\norm{R_j^2\overline{R_{jx}}}_{L^1}+\norm{\partial (R_j^2\overline{R_{jx}})}_{L^1}+\norm{\partial^2(R_j^2\overline{R_{jx}})}_{L^1})\\
&\lesssim  \sum_{j=1}^K\norm{R_j}^3_{H^3}+\norm{\partial R_0}_{W^{2,1}}<C<\infty.
\end{align*}
On the other hand,
\begin{align*}
&\norm{\chi_2(t)}_{W^{2,1}}\\
&\lesssim \norm{|V|^4V-|R_0|^4R_0}_{W^{2,1}}+\sum_{j=1}^K\norm{|R_j|^5}_{W^{2,1}}\\
&\lesssim \left\lVert|R_0|^4\sum_{j=1}^K|R_j|+\sum_{j=1}^K|R_j|^5\right\rVert_{W^{2,1}}+\sum_{j=1}^K\norm{R_j}^5_{W^{2,1}}\\
&\lesssim \sum_{j=1}^K \norm{|R_0|^4|R_j|}_{W^{2,1}}+\sum_{j=1}^K\norm{R_j}^5_{W^{2,1}}\\
&\lesssim \sum_{j=1}^K(\norm{R_j}_{W^{2,1}}\norm{R_0}^4_{W^{2,\infty}}+\norm{R_j}^5_{H^3})<C<\infty.
\end{align*}
Thus, 
\begin{equation}\label{qw}
\norm{\chi_1(t)}_{W^{2,1}}+\norm{\chi_1(t)}_{W^{2,1}}<\infty.
\end{equation}
From \eqref{qq} and \eqref{qw}, using H\"older inequality, we have
\[
\norm{\chi_1(t)}_{H^2}+\norm{\chi_2(t)}_{H^2} \lesssim_p e^{-\frac{1}{8}v_{*}t}.
\]
Let $T_0$ be large enough, we have
\[
\norm{\chi_1(t)}_{H^2}+\norm{\chi_2(t)}_{H^2} \leq e^{-\frac{1}{16}v_{*}t}, \quad \forall t \geq T_0.
\] 
Setting $\lambda=\frac{1}{16}v_{*}$, we obtain the desired result.
\end{proof}

\section*{Acknowledgement} I wishes to thank Prof.Stefan Le Coz for his guidance and encouragement. I am supported by scholarship of MESR for his phD. This work is also supported by the ANR LabEx CIMI (grant ANR-11-LABX-0040) within the French State Programme “Investissements d’Avenir. Finally, I wishes to thank the unknown referees for careful reading and many useful discussions to improve this paper.   

%\bibliographystyle{abbrv}
%\bibliography{bibliotheque15}

\bibliographystyle{abbrv}
\bibliography{bibliothequemultisolitonmultikink}

\end{document}